\begin{document}
\newtheorem{myclaim}{Claim}
\newtheorem{observation}{Observation}
\newtheorem{proposition}{Proposition}
\newtheorem{definition}{Definition}
\newtheorem{construction}{Construction}
\newtheorem{conjecture}{Conjecture}
\newtheorem{example}{Example}
\newtheorem{ques}{Question}
\newtheorem{lemma}{Lemma}
\newtheorem{corollary}[proposition]{Corollary}
\newtheorem{theorem}{Theorem}
\newcommand{\rr}{\mathbb{R}}
\newcommand{\zz}{\mathbb{Z}}
\newcommand{\bb}{\mathbb{B}}
\newcommand{\nn}{\mathbb{N}}
\newcommand{\mn}{\textup{min}}
\newcommand{\mx}{\textup{max}}
\newcommand{\st}{\textup{s.t.}}
\newcommand{\ignore}[1]{}

\def\bred{\color{red}}

\title{On a Cardinality-Constrained Transportation Problem With Market Choice}
\author{Pelin  Damc{\i}-Kurt\footnote{damci-kurt.1@osu.edu, Department of Integrated Systems Engineering, The Ohio State University, Columbus, OH 43210, United States} \and Santanu S. Dey\footnote{santanu.dey@isye.gatech.edu, School of Industrial and Systems Engineering, Georgia Institute of Technology, Atlanta, GA 30332, United States} \and Simge   K\"{u}\c{c}\"{u}kyavuz\footnote{kucukyavuz.2@osu.edu, Department of Integrated Systems Engineering, The Ohio State University, Columbus, OH 43210, United States }}
\date{}

\maketitle
%

\begin{abstract}
It is well-known that the intersection of the matching polytope with a cardinality constraint is integral~\cite{schrijver}. We prove a similar result for the polytope corresponding to the transportation problem with market choice (TPMC) (introduced in~\cite{DDK14}) when the demands are in the set $\{1,2\}$. This result generalizes the result regarding the matching polytope and also implies that some special classes of minimum weight perfect matching problem with a cardinality constraint on a subset of edges can be solved in polynomial time.
\end{abstract}


\section{Introduction and Main Result}\label{sec:intro}

\subsection{Transportation Problem with Market Choice}
The transportation problem with market choice (TPMC), introduced in the paper~\cite{DDK14}, is a transportation problem in which suppliers with limited capacities have a choice of which demands (markets) to satisfy. If a market is selected, then its demand must be satisfied fully through shipments from the suppliers. If a market is rejected, then the corresponding potential revenue is lost. The objective is to minimize the total cost of shipping and lost revenues.  See~\cite{geunes,levi,vanDenHeuvel} for approximation algorithms and heuristics  for several other  supply chain planning and logistics problems with market choice.

Formally, we are given a set of supply and demand nodes that form a bipartite graph $G(V_1 \cup V_2, E)$. The nodes in set $V_1$ represent the supply nodes, where for $i\in V_1$, $s_i\in \mathbb{N}$  represents the capacity of supplier $i$. The nodes in set $V_2$ represent the potential markets, where for $j\in V_2$, $d_j\in \mathbb{N}$ represents the demand of market $j$. The edges between supply and demand nodes have weights that represent shipping costs $w_{ij}$, where $(i,j) \in E$.
For each $j \in V_2$, $r_j$ is the revenue lost if the market $j$ is rejected. Let $x_{ij}$ be the amount of demand of market $j$ satisfied by supplier $i$ for $(i,j)\in E$, and let $z_{j}$ be an indicator variable taking a value 1 if market $j$ is rejected and 0 otherwise. A mixed-integer programming (MIP) formulation of the problem is given where the objective is to minimize the transportation costs and the lost revenues due to unchosen markets:
\begin{align}  \label{TPSL:objective}
\min_{x \in \rr^{|E|}_{+}, z \in \{0, 1\}^{|V_2|}} &\quad  \sum_{(i,j)\in E} w_{ij}x_{ij} + \sum_{j \in V_2} r_{j}z_{j} \\ \label{TPSL:demandcons}
\st & \quad \sum_{i: (i,j)\in E} x_{ij}  = d_j (1-z_j) & \forall j \in V_2\\
\label{TPSL:supplycons}  & \quad \sum_{j: (i,j)\in E} x_{ij} \leq s_i & \forall i \in V_1.
\end{align}
We refer to the formulation \eqref{TPSL:objective}-\eqref{TPSL:supplycons} as TPMC. The first set of constraints \eqref{TPSL:demandcons}  ensures that if market $j\in V_2$ is selected (i.e., $z_j=0$), then its demand must be fully satisfied.  The second set of constraints \eqref{TPSL:supplycons} model the supply restrictions.

TPMC is strongly NP-complete in general~\cite{DDK14}. The paper \cite{AB14} give polynomial-time reductions from this problem to the capacitated facility location problem~\cite{KH63}, thereby establishing approximation algorithms with constant factors for the metric case and a logarithmic factor for the general case.

\subsection{TMPC with $d_j \in \{1,2\}$ for all $j \in V_2$ and the Matching Polytope}
When $d_j \in \{1,2\}$ for each demand node $j \in V_2$, TPMC is polynomially solvable~\cite{DDK14}. This is proven through a reduction to a minimum weight perfect matching problem on a general (non-bipartite) graph $G'=(V',E')$; see~\cite{DDK14}. We call this special class of the problem, the {\it simple TPMC problem} in the rest of this note.

\begin{observation}[Simple TPMC generalizes Matching on General Graphs]\label{mathobs}
The matching problem can be seen as a special case of the simple TPMC problem. Let $G(V,E)$ be a graph with $n$ vertices and $m$ edges. We construct a bipartite graph {$\hat{G}(\hat{V}^1 \cup \hat{V}^2, \hat{E})$} as follows: {$\hat{V}^1$} is a set of $n$ vertices corresponding to the $n$ vertices in $G$, and {$\hat{V}^2$} corresponds to the set of edges of $G$, i.e., {$\hat{V}^2$} contains $m$ vertices. We use $(i,j)$ to refer to the vertex in {$\hat{V}^2$} corresponding to the edge $(i,j)$ in $E$. The set of edges in {$\hat{E}$} are of the form $(i, (i,j))$ and $(j, (i,j))$ for every $i, j \in V$ such that $(i,j) \in E$. Now we can construct (the feasible region of) an instance of TPMC with respect to {$\hat{G}(\hat{V}^1 \cup \hat{V}^2, \hat{E})$} as follows:
\begin{eqnarray}
\label{match1}Q= \{ (x, z) \in \mathbb{R}^{2m} \times \mathbb{R}^{m} \, |\, x_{i, (i,j)} + x_{j, (i,j)} {+ 2z_{(i,j)}} = 2 \ \forall (i,j) \in {\hat{V}^2} \\
\label{match2}\sum_{j:(i,j) \in E} x_{i, (i,j)} \leq 1 \ \forall i \in {\hat{V}^1} \\
\label{match3} z_{(i,j)} \in \{0,1\} \ \forall (i,j) \in {\hat{V}^2} \}.
\end{eqnarray}

Clearly there is a bijection between the set of matchings in $G(V,E)$ and the set of solutions in $Q$. Moreover, let $$H: = \{ (x,z,y) \in \mathbb{R}^{2m} \times \mathbb{R}^{m} \times \mathbb{R}^{m}\,|\, (x,z) \in Q, y = e - z\},$$ where $e$ is the all ones vector in $\mathbb{R}^m$. Then we have that the incidence vector of all the matchings in $G(V,E)$ is precisely the set $\textup{proj}_y(H)$.

Note that the instances of the form of (\ref{match1})-(\ref{match3}) are special cases of simple TPMC instances, since in these instances all $s_i$'s are restricted to be exactly $1$ and all $d_j$'s are restricted to be exactly $2$.
\end{observation}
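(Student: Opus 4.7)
The plan is to establish three things in sequence: (i) feasibility in $Q$ forces the shipment variables $x$ to be completely determined by the rejection variables $z$; (ii) the resulting map $(x,z) \mapsto \{(i,j) \in E : z_{(i,j)} = 0\}$ is a bijection between $Q$ and the set of matchings of $G$; and (iii) the instance (\ref{match1})--(\ref{match3}) falls in the simple TPMC class.

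First I would fix any $(x,z) \in Q$ and split on $z_{(i,j)} \in \{0,1\}$ for each $(i,j) \in \hat{V}^2$. If $z_{(i,j)} = 1$, then (\ref{match1}) reads $x_{i,(i,j)} + x_{j,(i,j)} = 0$, so both entries vanish by nonnegativity. If $z_{(i,j)} = 0$, then $x_{i,(i,j)} + x_{j,(i,j)} = 2$, while each of these variables, being a single nonnegative term in a sum bounded by $1$ in (\ref{match2}), is itself at most $1$; hence both equal $1$. This forcing step is where the parameters $s_i = 1$ and $d_{(i,j)} = 2$ interact tightly with nonnegativity, and it is essentially the only nontrivial observation in the argument.

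With the forcing in hand, the bijection becomes transparent. For $(x,z) \in Q$, define $M(x,z) := \{(i,j) \in E : z_{(i,j)} = 0\}$. The supply constraint at any $i \in \hat{V}^1$ now reads $|\{j : (i,j) \in M(x,z)\}| \leq 1$, which is exactly the matching condition at $i$; hence $M(x,z)$ is a matching in $G$. The inverse map sends a matching $M$ to $z_{(i,j)} := 1 - \mathbf{1}[(i,j) \in M]$ together with $x_{i,(i,j)} := x_{j,(i,j)} := \mathbf{1}[(i,j) \in M]$, and a one-line check shows this pair lies in $Q$ and inverts the previous map. Since $y = e - z$, we have $y_{(i,j)} = \mathbf{1}[(i,j) \in M(x,z)]$, so $\textup{proj}_y(H)$ coincides exactly with the set of incidence vectors of matchings of $G$, as claimed.

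Finally, to verify that (\ref{match1})--(\ref{match3}) is a simple TPMC instance on $\hat{G}$, I would read off the TPMC data: take $s_i = 1$ for every $i \in \hat{V}^1$ and $d_{(i,j)} = 2$ for every $(i,j) \in \hat{V}^2$ (with any shipping costs and any positive lost revenues, as these do not enter the description of the feasible region). Then (\ref{TPSL:demandcons}) specializes to (\ref{match1}) and (\ref{TPSL:supplycons}) to (\ref{match2}), and since $d_j \in \{1,2\}$ the instance belongs to the simple TPMC class. The main conceptual step in the whole plan is the forcing argument of the second paragraph; everything else is direct verification, so I do not anticipate a real obstacle.
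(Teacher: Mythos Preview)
Your proposal is correct and simply spells out what the paper leaves implicit: the paper gives no separate proof of Observation~\ref{mathobs}, asserting only that ``clearly there is a bijection,'' and your forcing argument is exactly the natural way to make that sentence precise. One minor remark: the set $Q$ as literally typeset in the paper omits the constraint $x \ge 0$, yet your forcing step (both the vanishing when $z_{(i,j)}=1$ and the bound $x_{i,(i,j)}\le 1$ from the supply constraint) relies on it; since nonnegativity is part of the TPMC feasible region~\eqref{TPSL:objective}, you are right to use it, but it would be worth saying explicitly that you are reading $Q$ as a TPMC instance and hence taking $x \in \rr^{2m}_{+}$.
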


\subsection{Simple TPMC with Cardinality Constraint: Main Result}

An important and natural constraint that one may add to the TPMC problem is that of a service level, that is the number of rejected markets is restricted to be at most $k$. This restriction can be modelled using a \emph{cardinality constraint}, $\sum_{j\in V_2} z_j \le k$, appended to \eqref{TPSL:objective}-\eqref{TPSL:supplycons}. We call the resulting problem cardinality-constrained TPMC (CCTPMC). If we are able to solve CCTPMC in polynomial-time, then we can solve TPMC in polynomial time by solving CCTPMC for all $k \in \{0, \dots, |V_2|\}$. Since TPMC is NP-hard,  CCTPMC is NP-hard in general.

In this note, we examine the effect of  appending a cardinality constraint  to the simple TPMC problem.

\begin{theorem}\label{conjectureintproperty} Given an instance of TPMC with $V_2$, the set of demand nodes, and $E$, the set of edges, let $X \in \mathbb{R}^{|E|}_{+} \times \{0,1\}^{|V_2|}$ be the set of feasible solutions of this instance of TPMC. Let $k \in \mathbb{Z}_{+}$ and $k \leq |V_2|$. Let $X^k := \textup{conv}(X \cap \{ (x,z) \in \rr^{|E|}_{+} \times \{0, 1\}^{|V_2|}\,| \sum_{j \in V_2} z_j \leq k\}).$ If $d_j\leq2$ for all $j \in V_2$, then $X^k = \textup{conv} (X) \cap \{ (x,z) \in \rr^{|E|}_{+} \times [0, 1]^{|V_2|}\,|\, \sum_{j \in V_2} z_j \leq k\}.$
\end{theorem}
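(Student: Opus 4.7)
The plan is to prove that every extreme point $(x^*, z^*)$ of $P \cap H$, where $P := \textup{conv}(X)$ and $H := \{(x,z) : \sum_{j \in V_2} z_j \leq k\}$, has integer $z$-coordinates. Since any such extreme point then lies in $X \cap H$, it lies in $X^k$, giving the desired equality. The strategy is combinatorial and mirrors the classical alternating-path proof of integrality for the cardinality-constrained matching polytope referenced in the abstract.

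As a preliminary, I would first note that $P$ itself has integer vertices in both $x$ and $z$: for each fixed $z \in \{0,1\}^{|V_2|}$, the residual $x$-polyhedron is a transportation polytope with integer right-hand sides $s_i$ and $d_j(1 - z_j)$, hence integer at its vertices. Given an extreme point $(x^*, z^*)$ of $P \cap H$, if $\sum_j z_j^* < k$ then the cardinality constraint is inactive and $(x^*, z^*)$ is already a vertex of $P$, hence integer. Otherwise $\sum_j z_j^* = k$, and one can write $(x^*, z^*) = \sum_{i=1}^t \lambda_i (x^i, z^i)$ as a convex combination of vertices of $P$, each $(x^i, z^i) \in X$ integer.

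The core step is the following \emph{exchange lemma}: whenever $(x^a, z^a), (x^b, z^b) \in X$ are integer solutions with $\sum_j z^a_j \ge \sum_j z^b_j + 2$, there exist integer $(\hat x, \hat z), (\tilde x, \tilde z) \in X$ with $\hat x + \tilde x = x^a + x^b$, $\hat z + \tilde z = z^a + z^b$, $\sum_j \hat z_j = \sum_j z^a_j - 1$, and $\sum_j \tilde z_j = \sum_j z^b_j + 1$. Given this lemma, iteratively replacing pairs in the convex decomposition strictly shrinks the spread of $\sum_j z^i_j$; since $\sum_i \lambda_i \sum_j z^i_j = k \in \zz$, the process terminates with a decomposition in which every $\sum_j z^i_j = k$, which certifies $(x^*, z^*) \in X^k$.

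To prove the exchange lemma, I would build an auxiliary bipartite multigraph on $V_1 \cup \tilde V_2$, where each demand node $j$ with $d_j = 2$ is split into two unit-demand copies, and in which $x^a$ and $x^b$ become $\{0,1\}$-valued edge sets of bounded degree. The symmetric difference $x^a \triangle x^b$, augmented with ``rejection markers'' for markets $j$ with $z^a_j \ne z^b_j$, decomposes into paths and even cycles alternating between $a$-edges and $b$-edges. Because $\sum_j z^a_j - \sum_j z^b_j \ge 2$, a parity/counting argument produces an alternating path whose endpoints both correspond to demand copies rejected by $a$ but accepted by $b$; flipping the $a$/$b$ labels along this path transfers one rejection from $a$ to $b$ and yields $(\hat x, \hat z), (\tilde x, \tilde z)$, with supply capacities preserved because at each internal supply node the flip removes one edge and inserts one edge, leaving $\sum_j \hat x_{ij} \le s_i$ and $\sum_j \tilde x_{ij} \le s_i$ intact. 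The main obstacle is carrying this construction out rigorously for markets with $d_j = 2$: the two unit-copies of such a market must remain consistently assigned (both accepted or both rejected) in each new solution, and one must verify that the alternating path can always be chosen to respect this coupling. This is the technical heart of the argument and is where the hypothesis $d_j \le 2$ is essential, since it is precisely what bounds the degree of each demand vertex in the auxiliary graph by $2$ and reduces the symmetric-difference structure to a disjoint union of paths and cycles.
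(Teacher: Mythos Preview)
Your plan is essentially the paper's, repackaged. The paper formulates an ``edge property'' (two optimal solutions with cardinality gap $\ge 2$ admit an intermediate optimal solution) and deduces the theorem from it via the edge structure of $\textup{conv}(X)$; your exchange lemma is the sum-preserving strengthening of the same statement, and the paper's core construction (its Claim~2, built on a ``conflict graph'' whose nodes are demand sub-nodes rather than your split-demand bipartite graph) in fact produces $(x^3,z^3),(x^4,z^4)$ with $x^3+x^4=x^1+x^2$ and $z^3+z^4=z^1+z^2$, i.e.\ it proves exactly your exchange lemma. Two remarks. First, your iterated-replacement step is cleanest if you first note that an extreme point of $P\cap H$ not already a vertex of $P$ must lie on an \emph{edge} of $P$, so one may take $t=2$; this is precisely how the paper's Proposition~1 argues, and without it the termination of your replacement process is not immediate (the number of terms in the decomposition can grow). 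Second, the ``main obstacle'' you correctly flag---keeping the two unit-copies of a $d_j=2$ market consistently accepted or rejected after the swap---is exactly where the paper spends its effort: it introduces ``partial nodes'' and ``mirror paths'' and gives an explicit algorithm that strings alternating paths together in matched pairs so that every partial copy selected is accompanied by one of its mirror copies on the other side. That piece is the technical heart in both approaches, and your sketch would need to be filled in along very similar lines.
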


Our proof of Theorem \ref{conjectureintproperty} is presented in Section 2. We note that the result of Theorem \ref{conjectureintproperty} holds even when $X^k$ is defined as $\textup{conv}(X \cap \{ (x,z) \in \rr^{|E|}_{+} \times \{0, 1\}^{|V_2|}\,| \sum_{j \in V_2} z_j \geq k\})$ or $\textup{conv}(X \cap \{ (x,z) \in \rr^{|E|}_{+} \times \{0, 1\}^{|V_2|}\,| \sum_{j \in V_2} z_j = k\})$.

By invoking the ellipsoid algorithm and the use of Theorem \ref{conjectureintproperty} we obtain the following corollary.
\begin{corollary}\label{corrolary1}
Cardinality constrained simple TPMC is polynomially solvable.
\end{corollary}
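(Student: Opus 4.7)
The plan is to show that every vertex of the polytope $\textup{conv}(X) \cap \{(x,z) \in \rr^{|E|}_+ \times [0,1]^{|V_2|}\,|\, \sum_{j \in V_2} z_j \le k\}$ has integer $z$-coordinates; integrality of $x$ then follows because, once $z$ is fixed, the remaining transportation subproblem has a totally unimodular constraint matrix. Two observations organize the argument. First, $\textup{conv}(X)$ is itself an integer polytope: this is immediate from \cite{DDK14}, where simple TPMC is reduced to a minimum-weight perfect matching problem on a general graph, whose polytope is integer by Edmonds' theorem. Second, any fractional vertex of the right-hand polytope must therefore lie on the hyperplane $\sum_{j \in V_2} z_j = k$, so it is enough to prove
\[
\textup{conv}(X) \cap \{(x,z)\,|\,\textstyle\sum_{j \in V_2} z_j = k\} = \textup{conv}\bigl(X \cap \{(x,z)\,|\,\textstyle\sum_{j \in V_2} z_j = k\}\bigr)
\]
for each integer $k \in \{0, 1, \ldots, |V_2|\}$.

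To prove this equality, I would take an arbitrary $(\bar{x},\bar{z})$ in the left-hand set, decompose it as $(\bar{x},\bar{z}) = \sum_{\ell} \lambda_\ell (x^\ell, z^\ell)$ with each $(x^\ell,z^\ell) \in X$ integer, and then repeatedly modify the decomposition until every summand satisfies $\sum_{j \in V_2} z^\ell_j = k$. The inductive step is a swap lemma: given two integer feasible solutions $(x^1, z^1), (x^2, z^2) \in X$ with $\sum_{j} z^1_j < k < \sum_{j} z^2_j$, construct $(\tilde{x}^1, \tilde{z}^1), (\tilde{x}^2, \tilde{z}^2) \in X$ with $\tilde{x}^1 + \tilde{x}^2 = x^1 + x^2$, $\tilde{z}^1 + \tilde{z}^2 = z^1 + z^2$, and $\sum_{j} \tilde{z}^1_j = \sum_{j} z^1_j + 1$. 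Replacing $\min(\lambda_1, \lambda_2)$ units of the original pair by the new pair strictly shrinks the total $\ell_1$-deviation of the $z$-sums from $k$ in the decomposition, and iterating terminates with all mass supported on $X \cap \{\sum_j z_j = k\}$.

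The core construction is the swap itself. Exploiting $d_j \in \{1,2\}$, I would pass to the perfect-matching reformulation of \cite{DDK14}: each supplier of capacity $s_i$ is split into $s_i$ unit-capacity copies, and each demand is represented by a small gadget. The two TPMC solutions then correspond to perfect matchings $M^1, M^2$ in an auxiliary general graph, and the rejection indicators $z_j$ translate into the presence of a distinguished subset $F$ of edges in the matching. The symmetric difference $M^1 \triangle M^2$ decomposes into disjoint alternating cycles, and because $|M^2 \cap F| - |M^1 \cap F| \ge 2$, these cycles collectively shift at least two more $F$-edges from $M^2$ to $M^1$ than vice versa. Toggling an appropriately chosen cycle (or short combination of cycles) yields a swap that changes the $F$-count by exactly $+1$ in $M^1$ and $-1$ in $M^2$, which translates back into the desired $(\tilde{x}^1, \tilde{z}^1), (\tilde{x}^2, \tilde{z}^2)$.

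The main obstacle I anticipate is isolating such an ``atomic'' alternating cycle that adjusts the $F$-count by precisely $\pm 1$: a single cycle could easily shift several $F$-edges at once and thus overshoot $k$. Controlling this is exactly where the hypothesis $d_j \le 2$ must enter, since the corresponding demand gadgets are small and each contributes at most a bounded number of $F$-edges to any alternating cycle, allowing a cycle-by-cycle or edge-by-edge adjustment of the swap. Verifying that this finer decomposition always exists---and understanding why the analogous argument should fail when some $d_j \ge 3$---will be the technical heart of the proof.
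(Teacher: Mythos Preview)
Your proposal conflates the corollary with Theorem~\ref{conjectureintproperty}. In the paper, Corollary~\ref{corrolary1} is dispatched in one sentence: once Theorem~\ref{conjectureintproperty} gives $X^k=\textup{conv}(X)\cap\{\sum_j z_j\le k\}$, one separates over $\textup{conv}(X)$ in polynomial time (simple TPMC reduces to minimum-weight perfect matching, whose polytope has a polynomial separation oracle), appends the single cardinality inequality, and invokes the ellipsoid method. You never mention separation or ellipsoid, so even granting every integrality claim in your write-up the corollary is not actually established; integrality of a polytope by itself does not imply polynomial optimization.

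What you have really sketched is an alternative proof of Theorem~\ref{conjectureintproperty}, and at that level your plan and the paper's coincide. Your ``swap lemma'' (from $(x^1,z^1),(x^2,z^2)\in X$ with $\sum_j z^1_j<\sum_j z^2_j$, produce $(\tilde x^1,\tilde z^1),(\tilde x^2,\tilde z^2)\in X$ with the same coordinatewise sums and $z$-totals shifted by $\pm1$) is exactly the paper's \emph{edge property} (Definition~\ref{edgeproperty}, Claim~\ref{claim0}), and your iterative rebalancing of a convex decomposition is Proposition~\ref{edgproperty-convexhull}. The implementation differs: you go through the perfect-matching reformulation of~\cite{DDK14} and toggle alternating cycles in $M^1\triangle M^2$, whereas the paper stays on the TPMC side and builds a bipartite conflict graph $G^*$ directly from the supplier assignments in the two solutions. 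The obstacle you flag---extracting a toggle that changes the rejection count by \emph{exactly} one---is precisely what the paper's Claims~\ref{claim2}--\ref{claim10} and Algorithm~\ref{algotable} resolve: after splitting suppliers so that $s_i=1$, the hypothesis $d_j\le2$ forces every node of $G^*$ to have degree at most two, so components are paths and even cycles, and a short value-counting/marking procedure selects a union of components whose ``swap'' moves $\sum_j z_j$ by $\pm1$. Your perfect-matching route would still need an equivalent decomposition argument threaded through the demand gadgets, so it is not clear it buys any simplicity over the paper's direct approach.
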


We note two other consequences of Theorem~\ref{conjectureintproperty}:
\begin{enumerate}
\item Special class of minimum weight perfect matching problem with a cardinality constraint on a subset of edges can be solved in polynomial time: As discussed in the previous section, simple TPMC can be reduced to a minimum weight perfect matching problem on a general (non-bipartite) graph $G'=(V',E')$~\cite{DDK14}. Therefore, it is possible to reduce CCTPMC with $d_j \leq 2$ for all $j\in V_2$ to a \emph{minimum weight perfect matching problem with a cardinality constraint on a subset of edges}.
    Hence, Corollary \ref{corrolary1} implies that a special class of minimum weight perfect matching problem with a cardinality constraint on a subset of edges can be solved in polynomial time.

    Note that the intersection of the perfect matching polytope with a cardinality constraint on a strict subset of edges is not always integral.

    \begin{example}\label{example:pefectmatch} Consider the bipartite graph $G(V_1\cup V_2,E)$ with $V_1 = \{1, 2, 3\}, V_2=\{ 4, 5, 6\}$, $E = \{(1,4), (1,5), $ $ (2,4), (2, 5), (2,6), (3,5), (3, 6)\}$, and the cardinality constraint $x_{14} + x_{25} \leq 1$. It is straightforward to show that  $x_{14} = x_{15} =x_{24} =x_{25} =0.5, x_{26} = x_{3 5} =0, x_{36} = 1$  is a fractional extreme point of the intersection of the perfect matching polytope with the cardinality constraint.
    \end{example}

    To the best of our knowledge, the complexity status of minimum weight perfect matching problem on a general graph with a cardinality constraint on a subset of edges is open. This can be seen by observing that if one can solve minimum weight perfect matching problem with a cardinality constraint on a subset of edges in polynomial time, then one can solve the exact perfect matching problem in polynomial time; see discussion in the last section in~\cite{bergerbogrsc2011}.

\item Generalization of the matching cardinality result: A well-known result is that the intersection of the matching polytope with a cardinality constraint is integral~\cite{schrijver}. It is straightforward to verify that this result follows from Theorem \ref{conjectureintproperty} applied to instances of TPMC constructed in Observation \ref{mathobs} (See Appendix A). However as mentioned in Observation \ref{mathobs}, the instances (\ref{match1})-(\ref{match3}) are special cases of simple TPMC instances. Therefore, Theorem~\ref{conjectureintproperty} represents a generalization of the classical result of integrality of the intersection of matching polytope and a cardinality constraint.
\end{enumerate}

Finally we ask the natural question: Does the statement of Theorem 1 hold when $d_j \leq 2$ does not hold for every $j$? The next example illustrates that the statement of Theorem 1 does not hold in such case.

\begin{example}\label{ex:CounterExample1}
Consider an instance of TPMC where $G(V_1 \cup V_2, E)$ is a bipartite graph with $V_1 = \{1,2,\ldots,6\}$, $V_2 = \{1,2,3,4\}$, $E=\{(1,1),  (2,2),  (3,3),  (4,1),   $ $ (4,4), (5,2), (5,4), (6,3), (6,4)\}$,  $s_i=1$, $i \in V_1$, $d_j=2$, $j = \{1,2,3\}$, $d_4=3$.   For  $k=2$ we it can be verified that we obtain a non-integer extreme point of $\textup{conv}(X) \cap \{(x,z) \in \rr^{p}_{+} \times [0,1]^n | \sum_{j=1}^{n} z_j \leq k \}$, given by $x_{11}=x_{22}=x_{33}=x_{41}=x_{44}=x_{52}=x_{54}=x_{63}=x_{64}=z_{1}=z_{2}=z_{3}=z_{4}=\frac{1}{2}$. Therefore, $X^k\ne \textup{conv}(X) \cap \{(x,z) \in \rr^{p}_{+} \times [0,1]^n | \sum_{j=1}^{n} z_j \leq k \}$ in this example.
\end{example}

\section{Proof of Theorem \ref{conjectureintproperty}}\label{sec:cardinality}

To prove Theorem \ref{conjectureintproperty}, one approach could be to appeal to the reduction to minimum weight perfect matching problem and then use the well-known adjacency properties of the vertices of the perfect matching polytope. However, as illustrated in Example \ref{example:pefectmatch}, the integrality result does not hold for the perfect matching polytope on a general graph with a cardinality constraint on any subset of edges. Therefore a generic approach considering the perfect matching polytope appears to be less fruitful. We use an alternative approach to prove this result. In particular, we apply a technique similar to that used in~\cite{aghezzaf}. Consider the following property:

\begin{definition}[Edge Property]\label{edgeproperty}
Let $T \subseteq \rr_{+}^{p} \times \{0,1\}^{n}$ be some mixed integer set. We say that $T$ satisfies the \emph{edge property} if for all $(w,r) \in \rr^{p+n}$ such that $\mn \{w^{\top}x + r^{\top}z\,|\, (x,z) \in T\}$ is bounded and has at least two optimal solutions, $(x^{1}, z^1)$ and $(x^2, z^2)$ where $\sum_{j=1}^{n} z^1_j = k^1$, $\sum_{j=1}^{n} z^2_j = k^2$ and $k^1 \leq k^2 - 2$, then there is an optimal solution $(x^3,z^3)$ such that $\sum_{j=1}^{n} z^3_j = k^3$ and $ k^1 < k^3 < k^2$.
\end{definition}

\begin{proposition}\label{edgproperty-convexhull}
Let $T \subseteq \rr_{+}^{p} \times \{0,1\}^{n}$ be a mixed integer set such that $\textup{conv}(T)$ is a pointed polyhedron and let $T^{k} := \textup{conv}(T \cap \{(x,z) \in \rr^{p}_{+} \times \{0,1\}^{n} \,|\, \sum_{j=1}^{n} z_j \leq k \}).$ If $T$ satisfies the edge property, then $T^{k} = \textup{conv}(T) \cap \{(x,z) \in \rr^{p}_{+} \times [0,1]^n | \sum_{j=1}^{n} z_j \leq k \}$.
\end{proposition}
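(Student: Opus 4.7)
The plan is to prove the nontrivial inclusion $\textup{conv}(T) \cap \{\sum_j z_j \leq k\} \subseteq T^k$ by a vertex-and-recession argument. Set $P := \textup{conv}(T) \cap \{(x,z) \in \rr^p_+ \times [0,1]^n : \sum_j z_j \leq k\}$, a polyhedron. Since $z$-coordinates in $\textup{conv}(T)$ are bounded in $[0,1]^n$, every recession direction of $P$ has zero $z$-component and is automatically a recession direction of $T^k$ as well, so it suffices to show that every vertex $(x^*,z^*)$ of $P$ lies in $T^k$. If $\sum_j z_j^* < k$, the cardinality constraint is slack and $(x^*,z^*)$ is a vertex of $\textup{conv}(T)$; pointedness plus the mixed-integer structure then put $(x^*,z^*)$ in $T$, hence in $T^k$. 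The critical case is $\sum_j z_j^* = k$, treated next.

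Assume $\sum_j z_j^* = k$ and choose $(w,r)$ so that $(x^*,z^*)$ is the \emph{unique} minimizer of $w^\top x + r^\top z$ over $P$ (possible because $(x^*,z^*)$ is a vertex), with minimum value $v$. Next I would Lagrangianize only the cardinality constraint: using $\min_{\textup{conv}(T)} c^\top y = \min_T c^\top y$ for any linear functional $c$, LP strong duality yields
\[
v \;=\; \max_{\mu \geq 0} g(\mu), \qquad g(\mu) \;:=\; -\mu k + \min_{(x,z)\in T}\bigl[w^\top x + (r + \mu e)^\top z\bigr],
\]
with $e \in \rr^n$ the all-ones vector. Each integer Lagrangian optimum $(x^o,z^o) \in T$ contributes an affine piece of slope $\sum_j z_j^o - k$ to the concave piecewise-linear function $g$. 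Let $\mu^* \geq 0$ attain the max. If $\mu^* = 0$, the condition $g'_+(0) \leq 0$ gives an integer $(x^o,z^o) \in T$ optimizing $w^\top x + r^\top z$ over $T$ with $\sum_j z_j^o \leq k$; this point lies in $T^k$ with value $g(0) = v$, and uniqueness forces $(x^o,z^o) = (x^*,z^*) \in T^k$.

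If $\mu^* > 0$, the condition $0 \in \partial g(\mu^*)$ supplies integer Lagrangian optima $(x^1,z^1), (x^2,z^2) \in T$ of cardinalities $k_1 \leq k \leq k_2$. The edge property---applied to the objective $(w, r + \mu^* e)$---then implies that the set $S \subseteq \zz$ of cardinalities of integer Lagrangian optima is a contiguous interval: indeed, if an integer $m \in [\min S, \max S]$ were missing, the largest $a \in S$ below $m$ and the smallest $b \in S$ above $m$ would satisfy $b - a \geq 2$, and the edge property would furnish some $c \in S$ with $a < c < b$, contradicting the choice of $a$ or $b$. Since $k_1, k_2 \in S$ and $k_1 \leq k \leq k_2$, we conclude $k \in S$ and extract an integer Lagrangian optimum $(x^o,z^o) \in T$ with $\sum_j z_j^o = k$. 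A direct substitution gives $w^\top x^o + r^\top z^o = v + \mu^* (k - \sum_j z_j^o) = v$, so $(x^o,z^o) \in T^k$ attains the minimum on $P$, and uniqueness forces $(x^o,z^o) = (x^*,z^*) \in T^k$.

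The step I expect to be the main obstacle is the subgradient bookkeeping at $\mu^*$---making the ``straddling'' extraction of $(x^1,z^1)$ and $(x^2,z^2)$ fully rigorous, and checking that the objective $(w, r + \mu^* e)$ stays bounded below on $T$ (which it does, because $g(\mu^*) = v$ is finite) so that the hypothesis of the edge property is available. Once this is in place, the interval argument for $S$ reduces to the clean contradiction sketched above, and the vertex-and-recession reduction handles the (mild) unboundedness of $\textup{conv}(T)$ with essentially no extra work.
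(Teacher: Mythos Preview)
Your argument is correct and takes a genuinely different route from the paper's. The paper proceeds geometrically: a vertex $(x',z')$ of $P$ not in $T^k$ must satisfy $\sum_j z'_j=k$ and lie in the relative interior of an \emph{edge} of $\textup{conv}(T)$; the two endpoints of that edge are points of $T$ whose cardinalities straddle $k$, and iterating the edge property along the edge produces a point of $T$ on the edge with cardinality exactly $k$, which---because the edge meets the hyperplane $\sum_j z_j=k$ in a single point---must coincide with $(x',z')$, contradicting fractionality. You instead Lagrangianize the single cardinality constraint, read off from the subdifferential of the concave dual function $g$ at its maximizer $\mu^*$ two integer Lagrangian optima whose cardinalities straddle $k$, and use the edge property to make the set of optimal cardinalities an interval; an integer optimum with cardinality $k$ then matches the $P$-optimal value, and uniqueness of the vertex as a minimizer forces equality. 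Both proofs use the edge property in the same iterative way; what differs is how the two ``straddling'' optima are produced---the paper from edge geometry of $\textup{conv}(T)$, you from LP duality and the subgradient of $g$. Your approach avoids reasoning about $1$-dimensional faces altogether and packages the argument in standard Lagrangian language, at the cost of a little bookkeeping (finiteness of $g$, attainment of the dual maximum) that you correctly flag and handle.

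One small point worth tightening: the sentence ``every recession direction of $P$ has zero $z$-component and is automatically a recession direction of $T^k$'' is asserted without proof. It is not literally obvious that $\textup{rec}(\textup{conv}(T))\subseteq\textup{rec}(T^k)$ for an arbitrary $T$ with $\textup{conv}(T)$ a pointed polyhedron, and $T^k$ is not assumed closed. The paper makes the same implicit leap (``since $\textup{conv}(T)$ is pointed this implies that there exists a vertex \ldots''), and in the intended application $\textup{conv}(T)$ is a polytope so the issue is vacuous; but if you want the statement in full generality you should either add a short argument for this inclusion or, more simply, note that your proof actually shows every vertex of $P$ lies in $T'=T\cap\{\sum_j z_j\le k\}$ itself, and then argue that $P=\textup{conv}(\textup{vert}(P))+\textup{rec}(\textup{conv}(T))\subseteq \textup{conv}(T')$.
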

We present the proof of the Proposition~\ref{edgproperty-convexhull} for completeness. See also~\cite{aghezzaf}.
\begin{proof}

Assume by contradiction that $$T^k \not =\textup{conv}(T) \cap \{(x,z) \in \rr^{p}_{+} \times [0,1]^n | \sum_{j=1}^{n} z_j \leq k \},$$ for some $k=k' \in \{0,1,\ldots,n\}$. By definition $T^{k} = \textup{conv}(T \cap \{(x,z) \in \rr^{p}_{+} \times \{0,1\}^{n} | \sum_{j=1}^{n} z_j \leq k \})$ so $T^k \subseteq \textup{conv}(T) \cap \{(x,z) \in \rr^{p}_{+} \times [0,1]^n | \sum_{j=1}^{n} z_j \leq k \}$ holds for all $k \in \{0,1,\ldots,n\}$. By assumption we obtain $T^{k'} \subset \textup{conv}(T) \cap \{(x,z) \in \rr^{p}_{+} \times [0,1]^n | \sum_{j=1}^{n} z_j \leq k' \}$. Since $\textup{conv}(T)$ is pointed this implies that there exists a vertex $(x',z')$ of $ \textup{conv}(T) \cap \{(x,z) \in \rr^{p}_{+} \times [0,1]^n | \sum_{j=1}^{n} z_j \leq k' \}$ such that $(x',z') \not \in T^{k'}$.
Therefore $z'$ is fractional and $\sum_{j=1}^{n}z_{j}'=k'$ (if $\sum_{j=1}^{n}z_{j}'<k'$, then this point is also a vertex of $\textup{conv}(T)$, therefore integral and belonging to $T^{k'}$ - a contradiction).

Since $(x',z')$ is not a vertex of $\textup{conv}(T)$, there exists $(w,r)$ such that the vertex $(x',z') $ is the intersection of the face defined by $\{(x,z) \in \rr^{p}_{+} \times [0,1]^n | \sum_{j=1}^{n} z_j = k' \}$ and an edge of $\textup{conv}(T)$ defined as:

\begin{eqnarray}\label{eqn:theedge}
\{(x,z) \in \textup{conv}(T) \,|\, w^{\top}x + r^{\top}z = \gamma\},
\end{eqnarray}
where $\gamma = \mn \{w^{\top}x + r^{\top}z\,|\, (x,z) \in \textup{conv}(T)\} = w^{\top}x'+ r^{\top}z'$. Let $(x^1,z^1)$ and $(x^2,z^2)$ be two feasible points of $T$ that belong to the edge (\ref{eqn:theedge}) such that $(x',z')$ is a convex combination of $(x^1, z^1)$ and $(x^2, z^2)$. Note that $\gamma = w^{\top}x'+ r^{\top}z' = w^{\top}x^1+ r^{\top}z^1 = w^{\top}x^2+ r^{\top}z^2$. Hence, $(x^1,z^1)$ and $(x^2,z^2)$ are two optimal solutions corresponding to the objective function $(w,r)$. Furthermore, due to our selection of $\gamma$, $\sum_{j \in V_2} z_{j}^{1} < k' < \sum_{j \in V_2} z_{j}^{2} $. The edge property ensures that there exists an integral optimal solution $(x^3,z^3)$ with $k^3=\sum_{j \in V_2} z_{j}^{3}=k'$ such that $\sum_{j \in V_2} z_{j}^{1} < k^3 < \sum_{j \in V_2} z_{j}^{2}$. However, this implies that $(x^3,z^3)$ belongs to the edge defined by (\ref{eqn:theedge}). Thus, $(x^3,z^3)$ must be a convex combination of $(x^1, z^1)$ and $(x^2, z^2)$ or equivalently, we must have $(x^3,z^3)=(x',z')$ with $z'$ integral, a contradiction.
\end{proof}


Next we will verify that the edge property holds for the polytope corresponding to the instances of simple TPMC. If $s_i > 1$ for some $i \in V_1$, we can construct a new instance of simple TPMC where we replace the node with $s_i$ identical nodes each with a capacity of $1$. Note that this is a polynomial construction, because the supply, $s_i$, is at most $2|V_2|$ for any $i\in V_1$. It is straightforward to show that the edge property holds for the polytope corresponding to the first instance of simple TPMC if and only if the edge property holds for the polytope corresponding to new instance of simple TPMC. Therefore, in order to prove Theorem \ref{conjectureintproperty} it is sufficient to verify the following result.

\begin{proposition}\label{keyprop}
The edge property holds for simple TPMC instances with $s_i = 1$ for all $i \in V_1$.
\end{proposition}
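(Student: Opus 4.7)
The plan is an augmenting-structures argument based on the symmetric difference of two integer optimal solutions. Since $s_i=1$, for every fixed $z\in\{0,1\}^{|V_2|}$ the TPMC LP in $x$ is a bipartite $b$-matching LP whose right-hand sides lie in $\{0,1,2\}$, so its vertices are integer; hence we may assume $(x^1,z^1)$ and $(x^2,z^2)$ are integer with $x^\ell\in\{0,1\}^{|E|}$. Write $M_\ell=\{(i,j):x^\ell_{ij}=1\}$, $S_\ell=\{j:z^\ell_j=0\}$, and $\delta:=k^2-k^1\ge 2$. Form the bipartite multigraph $H$ on $V_1\cup V_2$ whose edge-multiset is $(M_1\setminus M_2)\sqcup(M_2\setminus M_1)$ with each edge coloured by its owning solution. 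Decompose $H$ into colour-alternating walks by pairing, at every internal vertex, a colour-$1$ edge with a colour-$2$ edge: the pairing is forced at each supplier (degree at most $2$) and at each $d_j=1$ market, while at every $d_j=2$ market in $S_1\cap S_2$ (where $H$ may carry up to two edges of each colour) one fixes an arbitrary bijection. This produces alternating cycles and alternating paths whose endpoints are either suppliers used in exactly one of $M_1,M_2$ (types C and D) or slots at markets in $S_1\triangle S_2$ (types A and B, each $d_j=2$ asymmetric market contributing two such slots). A parity argument combining colour alternation with the bipartition of $G$ rules out A--A, B--B, A--D, B--C, C--C, and D--D paths, leaving only A--B, A--C, B--D, and C--D.

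At every $d_j=2$ market $j\in S_1\triangle S_2$ the two paths incident to $j$ must be swapped together to keep $j$ either fully matched or fully rejected. Accordingly, define a graph on the set of alternating paths whose edges join two paths that share such a market, and call a \emph{swap unit} either an alternating cycle or a connected component of this graph. Swapping any union $U$ of swap units yields a feasible TPMC solution whose cost differs from $c(x^1,z^1)$ by $\sum_{u\in U}\Delta c(u)$. Optimality of $(x^1,z^1)$ forces $\sum_{u\in U}\Delta c(u)\ge 0$ for every $U$, while taking $U$ to be all swap units gives $\sum_u\Delta c(u)=c(x^2,z^2)-c(x^1,z^1)=0$. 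Hence $\Delta c(u)=0$ for every swap unit $u$, so every swap is cost-preserving.

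Finally, define $\Delta z(u)$ to be the number of type-A markets fully contained in $u$ minus the number of type-B markets fully contained. A slot-counting argument matching the endpoints of the paths in $u$ against its $a_2(u)+b_2(u)$ binding edges forces the quantity $a_1(u)+b_1(u)+c(u)+d(u)$ (the number of $d=1$ asymmetric markets plus free supplier endpoints in $u$) to be at most $2$ for the binding graph of $u$ to be connected; combining this inequality with the path-type multiplicity equations $p_{AB}+p_{AC}=a_1+2a_2$ and $p_{AB}+p_{BD}=b_1+2b_2$ and with the parity-forbidden path types yields $\Delta z(u)\in\{-1,0,1\}$ for every swap unit $u$. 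Because $\sum_u\Delta z(u)=\delta\ge 2$ and every summand lies in $\{-1,0,1\}$, at least two swap units satisfy $\Delta z(u)=1$; swapping any one such $u$ produces a feasible solution $(x^3,z^3)$ that is optimal by the previous paragraph and satisfies $k^3=k^1+1\in(k^1,k^2)$, proving the edge property. The main obstacle is the per-unit bound $\Delta z(u)\in\{-1,0,1\}$, and this is exactly where the hypothesis $d_j\le 2$ is used — both to keep the binding graph sparse enough to force the counting inequality and to exclude A--A and B--B paths; Example~\ref{ex:CounterExample1} shows that the bound (and hence the edge property) fails as soon as $d_j=3$ is allowed.
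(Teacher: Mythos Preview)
Your argument is correct and reaches the same conclusion as the paper, but the route is genuinely different.

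\textbf{Where the two proofs diverge.} The paper also decomposes the symmetric difference of $(x^1,z^1)$ and $(x^2,z^2)$ into alternating pieces, but it does so via an auxiliary bipartite graph $G^*$ whose nodes are the markets (with markets in $S_1\cap S_2$ split into ``partial nodes'') and whose edges record a shared supplier. In $G^*$, \emph{asymmetric} markets (your type A/B) are single ``full'' nodes of degree $\le 2$ that paths may pass \emph{through}, whereas \emph{symmetric} $d_j=2$ markets are split into leaves and are the places where paths must be linked (``mirror paths''). You make the dual choice: your alternating walks in $H$ pass through symmetric markets (via the bijection you fix there) and terminate at asymmetric markets, and your ``binding graph'' links paths at asymmetric $d_j=2$ markets. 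Both decompositions encode the same combinatorics; the paper then runs an explicit marking algorithm over the mirror-path structure, while you get the bound $|\Delta z(u)|\le 1$ directly from endpoint counting on a connected component of the binding graph.

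\textbf{The cost argument.} The paper does \emph{not} use your subadditivity trick. Instead it builds the complementary swap $(x^4,z^4)$ from $(x^2,z^2)$ using the same set of paths, observes that the costs of $(x^3,z^3)$ and $(x^4,z^4)$ are $\rho-\delta$ and $\rho+\delta$, and infers $\delta=0$ from optimality of $\rho$ (their Claim~1 together with Claim~\ref{claim0}). Your argument --- $\Delta c(u)\ge 0$ for every swap unit by optimality of $(x^1,z^1)$, and $\sum_u\Delta c(u)=0$ because swapping everything yields $(x^2,z^2)$, hence each $\Delta c(u)=0$ --- is shorter and avoids ever building $(x^4,z^4)$.

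\textbf{One place to tighten.} Your final step is stated as ``a slot-counting argument \ldots\ yields $\Delta z(u)\in\{-1,0,1\}$'' without the key identity. For the record, the computation you are alluding to is
\[
\Delta z(u)=(a_1+a_2)-(b_1+b_2)=\tfrac{1}{2}\bigl((a_1-b_1)+(c-d)\bigr),
\]
obtained from $p_{AC}-p_{BD}=(a_1+2a_2)-(b_1+2b_2)=c-d$; together with $a_1+b_1+c+d=2p-2(a_2+b_2)\le 2$ (connectivity of the binding graph, which has no loops precisely because A--A and B--B paths are forbidden) this gives $|\Delta z(u)|\le 1$. Making this explicit would turn your sketch into a complete proof.
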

The rest of this note is a proof of Proposition \ref{keyprop}.


\begin{myclaim}\label{claim0}
Suppose that $(x^{1}, z^1)$ and $(x^2, z^2)$ are optimal solutions to $\mn \{w^{\top}x + r^{\top}z\,|\, (x,z) \in X\}$. Let $k^1= \sum_{j \in V_2}z^1_j$, $k^2= \sum_{j \in V_2}z^2_j$ and $k^1 \leq k^2 - 2$. If there exists two feasible solutions of $X$, namely $(x^3, z^3)$ and $(x^4, z^4)$, such that
\begin{enumerate}
\item $\sum_{j \in V_2}z^3_j = k^1 + 1$ and $\sum_{j \in V_2}z^4_j = k^2 - 1$ and \label{claim0-1}
\item The objective function value of $(x^3, z^3)$ is $\rho - \delta$ and that of $(x^4, z^4)$ is $\rho + \delta$, where $\rho$ is the objective function value of the solution $(x^1, z^1)$ and $\delta \in \mathbb{R}$,
\end{enumerate}
\end{myclaim}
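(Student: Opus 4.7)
The plan is to observe that Claim \ref{claim0} is essentially a short optimality sandwich: under the hypotheses, $\delta$ is squeezed to zero, so the two hypothesized intermediate points are themselves optimal and already have cardinalities strictly between $k^1$ and $k^2$. The expected conclusion (cut off in the excerpt) is thus that the edge property of Definition \ref{edgeproperty} holds for the pair $(x^1,z^1),(x^2,z^2)$.

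First I would invoke the optimality of $(x^1,z^1)$. Since $(x^3,z^3)\in X$ and $(x^1,z^1)$ achieves the minimum value $\rho$, we must have $w^{\top}x^3 + r^{\top}z^3 = \rho - \delta \geq \rho$, forcing $\delta \leq 0$. Applying the same reasoning to $(x^4,z^4)$ via the optimality of $(x^2,z^2)$, whose objective value is also $\rho$, yields $\rho + \delta \geq \rho$, hence $\delta \geq 0$. Combining, $\delta = 0$, and both $(x^3,z^3)$ and $(x^4,z^4)$ attain the optimum.

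Next I would check the strict cardinality inequality. By hypothesis $\sum_{j \in V_2} z^{3}_j = k^1 + 1$ and $k^1 \leq k^2 - 2$, so $k^1 < k^1 + 1 \leq k^2 - 1 < k^2$. Therefore $(x^3,z^3)$ (equivalently $(x^4,z^4)$) is an optimal solution whose $z$-sum lies strictly between $k^1$ and $k^2$, which is exactly what the edge property demands.

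There is no genuine obstacle inside the claim itself; the content is two inequalities plus a cardinality check. The substantive work — and where the main difficulty of proving Proposition \ref{keyprop} will reside — is in \emph{producing} the intermediate pair $(x^3,z^3),(x^4,z^4)$ from the two given optima, presumably via a combinatorial exchange argument on the bipartite graph that redistributes the demand-satisfaction patterns of $(x^1,z^1)$ and $(x^2,z^2)$ while exploiting $d_j\in\{1,2\}$. Claim \ref{claim0} is then the reusable packaging step that converts any such construction into the edge-property conclusion needed by Proposition \ref{edgproperty-convexhull}.
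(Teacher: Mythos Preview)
Your proof is correct and matches the paper's approach exactly: both use the optimality of $(x^1,z^1)$ (and implicitly $(x^2,z^2)$) to squeeze $\delta$ to zero, then observe that $(x^3,z^3)$ has cardinality strictly between $k^1$ and $k^2$. Your surrounding commentary about where the real work lies is also accurate.
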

then the edge property holds. \\
\begin{proof} Since $\rho$ is the optimal objective function value, we obtain that $\delta = 0$ since otherwise the objective function value of either $(x^3, z^3)$  or $(x^4, z^4)$  is better than that of $(x^1, z^1)$. Therefore $(x^3, z^3)$ is an optimal solution with  $ k^1 < \sum_{j \in V_2} z^3_j < k^2$.
\end{proof}

In what follows we assume that if $(x^1, z^1)$ is an optimal solution to $\mn \{w^{\top}x + r^{\top}z\,|\, (x,z) \in X\}$, then $x^1$ is integral. Otherwise, we can solve a simple transportation problem with the set of demand nodes $j$ such that $z^1_j = 0$. Since all data are integral, there exists an optimal solution with integral flows. Therefore, we may assume that $x^1$ is integral.

Given an integral point $(\tilde{x}, \tilde{z})$ of $X$, let $S(\tilde{z}) := \{j \in V_2\,|\, \tilde z_j = 0\}$ be the set of nodes in $V_2$ whose demands are met. For  $j \in S(\tilde{z})$, let $I_j(\tilde{x}, \tilde{z}) = \{ i \in V_1\,|\, \tilde{x}_{ij} > 0 \} = \{ i \in V_1\,|\, \tilde{x}_{ij} = 1 \}$ be the set of suppliers that sends one unit to $j$.

Given the optimal solutions $(x^1, z^1)$ and $(x^2, z^2)$, let $F := \left( S(z^1) \setminus S(z^2) \right) \cup \left( S(z^2) \setminus S(z^1) \right)$, $P:= S(z^1) \cap S(z^2)$ and $R:= V_2 \setminus (F \cup P)$. For $j \in F$, observe that only the set $I_j(x^1, z^1)$ or the set $I_j(x^2, z^2)$ is defined. So for $j \in F$, we define $I_j$ as:

\begin{eqnarray}
I_j := \left\{\begin{array}{rl} I_j(x^1, z^1) & \textup{if } j \in S(z^1) \setminus S(z^2) \\
I_j(x^2, z^2) &  \textup{if } j \in S(z^2) \setminus S(z^1). \\
\end{array}\right.
\end{eqnarray}

As a first step towards constructing $(x^3, z^3)$ and $(x^4, z^4)$ required in Claim \ref{claim0}, we construct a bipartite (conflict) graph $G^*(U_1 \cup U_2, \mathcal{E})$.  The set of nodes is constructed as follows:
\begin{enumerate}
\item If $j \in S(z^1) \setminus S(z^2)$, then $j \in U_1$ and $j$ is called a \emph{full node}. Let $W_1 = S(z^1) \setminus S(z^2)$ be the set of full nodes of $U_1$.
\item Similarly, if $j \in S(z^2) \setminus S(z^1)$, then $j \in U_2$ and $j$ is called a \emph{full node}. Let $W_2 = S(z^2) \setminus S(z^1)$ be the set of full nodes of $U_2$.
\item If $j \in S(z^1) \cap S(z^2)$ and $d_j=2$, then we place two copies of node $j$ in $U_1$  (call these $j_1$ and $j_2$) and two copies of $j$ in $U_2$ (call these $j_3$ and $j_4$). These nodes are called \emph{partial nodes} of $j$. Each partial node of $j$ is distinct: If $I_j(x^1, z^1) = \{t_1, t_2\}$, then associate (WLOG) $t_1$ with $j_1$ and $t_2$ with $j_2$, that is define $I_{j_1}: = \{t_1\}$ and $I_{j_2}:	= \{t_2\}$. Similarly if $I_j(x^2, z^2) = \{t_3, t_4\}$, then associate (WLOG) $t_3$ with $j_3$ and $t_4$ with $j_4$, that is define $I_{j_3} := \{t_3\}$ and $I_{j_4}:= \{t_4\}$. If {$j \in S(z^1) \cap S(z^2)$} and  $d_j=1$, then we place one copy of node $j$ in $U_1$ (call this $j_1$) and one copy of $j$ in $U_2$ (call this $j_3$). Similar to the $d_j=2$ case these nodes are called \emph{partial nodes} of $j$. If $I_{j}(x^1, z^1) = \{t_1\}$ and $I_{j}(x^2, z^2) = \{t_3\}$, then set $I_{j_1} = \{{t}_1\}$ and $I_{j_3} = \{t_3\}$.  Let $P = P^1 \cup P^2$, where $P^1 = \{ j  \in P: d_j = 1\}$ and $P^2 = \{ j  \in P: d_j = 2\}$.
\end{enumerate}
Thus, $U_1 = W_1\cup \left( \bigcup_{j \in P^2} \{j_1, j_2\} \right) \cup \left(\bigcup_{j \in P^1} \{j_1\}\right)$ and for each element $a \in U_1$ the set $I_a$ is well-defined and non-empty. Similarly, $U_2 = W_2 \cup \left(\bigcup_{j \in P^2} \{j_3, j_4\}\right) \cup \left(\bigcup_{j \in P^1} \{j_3\}\right)$ and for each element $b \in U_2$ the set $I_b$ is well-defined and non-empty. Now we construct the edges $\mathcal{E}$ as follows: For all $a \in U_1$ and $b \in U_2$,  there is an edge $(a,b) \in \mathcal{E}$ if and only if $a$ and $b$ have at least one common supplier, i.e.,
\begin{eqnarray}\label{sanegraph}
I_a \cap I_b \neq \emptyset \textup{ iff } (a, b) \in \mathcal{E}.
\end{eqnarray}
Let $G'(V', E')$ be a subgraph of $G^*(U_1 \cup U_2, \mathcal{E})$. Since the elements in $V' \cap (W_1 \cup W_2)$ correspond to unique elements in $V_2$, whenever required we will (with slight abuse of notation) treat $V' \cap (W_1 \cup W_2) \subseteq V_2$.

\begin{myclaim}\label{claim1}
Let $G'(V', E')$ be a subgraph of $G^*(U_1 \cup U_2, \mathcal{E})$ satisfying the following properties:
\begin{enumerate}
\item There are no edges in $G^*$ between the nodes in $V'$ and the nodes in $\left(U_1 \cup U_2 \right)\setminus V'$. \label{property1}
\item For each $j \in P^1$, $| V'  \cap \{j_1\}| = | V'  \cap \{j_3\}| $ and for each $j \in P^2$, $| V'  \cap \{j_1, j_2\}| = | V'  \cap \{j_3, j_4\}|$. \label{property2}
\item $|W_1 \cap V'| =  |W_2 \cap V'| + 1$. \label{property3}
\end{enumerate}
\end{myclaim}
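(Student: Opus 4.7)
The statement of Claim~\ref{claim1} is truncated in the excerpt at the list of hypotheses, but the preceding paragraph makes the intent clear: from such a subgraph $G'$ we want to build the two feasible solutions $(x^3,z^3)$ and $(x^4,z^4)$ demanded by Claim~\ref{claim0}, namely a point of $X$ with $\sum_{j\in V_2}z^3_j=k^1+1$ and a point with $\sum_{j\in V_2}z^4_j=k^2-1$, whose objective values are $\rho-\delta$ and $\rho+\delta$ for a common $\delta\in\rr$.

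The plan is to form $(x^3,z^3)$ by a \emph{local swap} on $V'$: keep $(x^1,z^1)$ everywhere outside the markets represented by $V'$, and overwrite it on the markets represented by $V'$ using the $(x^2,z^2)$ pattern. Concretely, for each $j\in W_2\cap V'$ set $z^3_j=0$ and route the $d_j$ units of flow into $j$ exactly as $x^2$ does (i.e.\ via $I_j$); for each $j\in W_1\cap V'$ set $z^3_j=1$ and delete the $x^1$-flows into $j$; and for each $j\in P$ with partial copies in $V'$, delete the $x^1$-unit(s) coming from the matched $I_{j_1},I_{j_2}$ copies in $V'\cap U_1$ and add the $x^2$-unit(s) coming from the matched $I_{j_3},I_{j_4}$ copies in $V'\cap U_2$. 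Construct $(x^4,z^4)$ by the symmetric swap starting from $(x^2,z^2)$ and splicing in the $V'$-portion of $(x^1,z^1)$.

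Two feasibility checks must then be carried out. First, demand balance: for every $j\in P$, property~2 forces the number of partial copies of $j$ in $V'\cap U_1$ to equal the number in $V'\cap U_2$, so the number of units deleted at $j$ equals the number of units added, and $j$ still receives exactly $d_j$. Second, supplier capacity: because $s_i=1$, we must ensure no supplier is used twice after the splice. This is precisely where property~1 together with the definition of $\mathcal{E}$ in~(\ref{sanegraph}) is used, since the absence of $G^*$-edges between $V'$ and $(U_1\cup U_2)\setminus V'$ means no supplier appearing in some $I_a$ for $a\in V'$ can also appear in any $I_b$ for $b\notin V'$ (across either optimal solution); hence the suppliers brought in by the $x^2$-side of the swap are disjoint from those retained from $x^1$. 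The accounting in property~3 finally gives
\[
\sum_{j\in V_2}z^3_j \;=\; k^1+|W_1\cap V'|-|W_2\cap V'| \;=\; k^1+1,
\]
and symmetrically $\sum_{j\in V_2}z^4_j=k^2-1$. Since $(x^1,z^1)$ and $(x^2,z^2)$ have the same objective $\rho$, if $\delta$ denotes the cost of the $(x^1,z^1)$-pattern on $V'$ minus the cost of the $(x^2,z^2)$-pattern on $V'$, then $(x^3,z^3)$ costs $\rho-\delta$ and $(x^4,z^4)$ costs $\rho+\delta$.

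The main obstacle is the supplier-capacity verification. One has to be careful with partial nodes: a supplier may serve a node $j\in P^2$ via one copy in $U_1$ and another node via a copy in $U_2$, and it is property~1 (no boundary edges in $G^*$) combined with the edge rule $I_a\cap I_b\neq\emptyset\Leftrightarrow(a,b)\in\mathcal{E}$ that rules out the dangerous case of a supplier straddling $V'$ and its complement. Once that disjointness is made precise, the rest of the argument is a direct verification.
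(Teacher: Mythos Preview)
Your proposal is correct and follows essentially the same approach as the paper. The paper writes out explicit formulas for $(x^3,z^3)$ and $(x^4,z^4)$ and then does a more granular case analysis for the supply constraint (splitting into the three cases $g,h\in F$; $g\in F,\ h\in P$; $g,h\in P$), but the underlying swap construction, the use of property~\ref{property2} for demand balance at partial nodes, the use of property~\ref{property1} plus~(\ref{sanegraph}) for supplier disjointness, and the cardinality count via property~\ref{property3} are exactly as you describe. One small sharpening: your sentence ``no supplier appearing in some $I_a$ for $a\in V'$ can also appear in any $I_b$ for $b\notin V'$'' is only delivered by property~\ref{property1} when $a$ and $b$ lie on opposite sides of the bipartition; when $a,b$ are on the same side (both in $U_1$ or both in $U_2$) the disjointness comes from feasibility of $(x^1,z^1)$ or $(x^2,z^2)$, which the paper uses explicitly in its case analysis.
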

Now construct
\begin{eqnarray}\label{swapdemand}
z^3_j &=& \left\{\begin{array}{cl} z^1_j & \textup{if } j \in V_2 \setminus (V' \cap F) \\
1  & \textup{if } j \in V' \cap W_1\\
0  & \textup{if } j \in V' \cap W_2.
\end{array}\right.\\
\label{swapdemandx} x^3_{ij} &=& \left\{\begin{array}{cl} 1 & \textup{if } j \in F, z^3_j = 0,  i \in I_j \\
1  & \textup{if } j \in P,  j_1 \in (U_1 \cup U_2) \setminus V', i \in I_{j_1} \\
1  & \textup{if } j \in P,  j_2 \in (U_1 \cup U_2) \setminus V', i \in I_{j_2} \\
1  & \textup{if } j \in P,  j_3 \in V', i \in I_{j_3} \\
1  & \textup{if } j \in P,  j_4 \in V', i \in I_{j_4} \\
0  & \textup{otherwise}.
\end{array}\right.
\end{eqnarray}
and
\begin{eqnarray}\label{swapdemand1}
z^4_j &=& \left\{\begin{array}{cl} z^2_j & \textup{if } j \in V_2 \setminus (V' \cap F)\\
0  & \textup{if } j \in V' \cap W_1\\
1  & \textup{if } j \in V' \cap W_2.
\end{array}\right.\\
\label{swapdemand1x} x^4_{ij} &=& \left\{\begin{array}{cl} 1 & \textup{if } j \in F, z^4_j = 0, i \in I_j\\
1  & \textup{if } j \in P,  j_3 \in (U_1 \cup U_2) \setminus V', i \in I_{j_3} \\
1  & \textup{if } j \in P,  j_4 \in (U_1 \cup U_2) \setminus V', i \in I_{j_4} \\
1  & \textup{if } j \in P,  j_1 \in V', i \in I_{j_1}\\
1  & \textup{if } j \in P,  j_2 \in V', i \in I_{j_2}\\
0  & \textup{otherwise}.
\end{array}\right.
\end{eqnarray}
Then $(x^3, z^3)$ and $(x^4, z^4)$ are feasible solutions of $X$ that satisfy the requirements of Claim \ref{claim0}.\\
\begin{proof}
\begin{enumerate}
\item We verify that $(x^3, z^3)$ is a valid solution to $X$. A similar proof can be given for the validity of $(x^4, z^4)$. Clearly $x^3$ and $z^3$ satisfy the variable restrictions. We verify that the constraint $\sum_{i : (i,j) \in E}x^3_{ij} + d_j z_j = d_j$ is satisfied for all $j \in V_2$. If $j \in R$, then $z^3_j = z^1_j = 1$ and $x^3_{ij} = 0$ for all $(i,j) \in E$; therefore the constraint is satisfied. If $j \in F$, then using the first and last entry in (\ref{swapdemandx}), we have  $\sum_{i : (i,j) \in E}x^3_{ij} + d_jz^3_j = d_j$. If $j \in P$, then $j \in V_2\setminus (V' \cap F)$. Therefore $z^3_j = z^1_j = 0$. Now it is straightforward to verify that $\sum_{i: (i,j) \in E}x^3_{ij} = 2 = d_j$ for each $j \in P^2$ since $| V'  \cap \{j_1, j_2\}| = | V'  \cap \{j_3, j_4\}| $ and by the use of the second, third, fourth and fifth entries in (\ref{swapdemandx}). For $j \in P^1$ we have $\sum_{i: (i,j) \in E}x^3_{ij} = 1 = d_j$ since $| V'  \cap \{j_1\}| = | V'  \cap \{j_3\}| $ and by the use of the second and fourth entries in (\ref{swapdemandx}).

Now we verify that the constraint $\sum_{j:(i,j) \in E} x_{ij} \leq 1$ is satisfied for all $i \in V_1$. Given $i \in V_1$, assume for contradiction that $x^3_{i g} = x^3_{i h} = 1$ for some $g, h \in V_2$ and $g \neq h$. By construction of $(x^3, z^3)$, $x^3_{ij} = 0$ for all $j \in R$. Thus, $g, h \notin R$. Moreover since $\sum_{i : (i,j) \in E}x^3_{ij} + d_j z_j = d_j$ is satisfied for all $j \in V_2$, we have  $z^3_g = z^3_h = 0$.
Now, there are three cases to consider:
\begin{enumerate}
\item $g, h \in F$. By construction of $x^3$ we have  $i \in I_{g}  \cap I_{h}$. Now if $g \notin V'$ and $h \notin V'$, then by construction of $z^3$  (first entry in (\ref{swapdemand})) we have  $z^1_g = z^3_g =  0 = z^3_h = z^1_h$ and thus $g, h \in S(z^1)$. Therefore by the validity of $(x^1, z^1)$ we have  $I_g \cap I_h = \emptyset$. This contradicts $i \in I_g \cap I_h$. Now consider the case where $g \in V'$ and $h \in V'$. Since $i \in I_g \cap I_h$ by (\ref{sanegraph}) there is an edge between $g$ and $h$ in $G^*(U_1\cup U_2, \mathcal{E})$. Thus we may assume without loss of generality that $g \in V' \cap W_1$ and $h \in V' \cap W_2$. However, this implies that  $z^3_g =1$, a contradiction. Now, without loss of generality, assume that $g \in V'$ and $h \notin V'$. Since $i \in I_g \cap I_h$ by (\ref{sanegraph}) there is an edge between $g$ and $h$ in $G^*(U_1\cup U_2, \mathcal{E})$. On the other hand, by assumption there is no edge between nodes in $V'$ and those not in $V'$, which is the required contradiction.
\item $g \in F$ and $h \in P$. We assume that $g \in W_1$ (the proof when $g \in W_2$ is similar). If $g \in V'$, then $z^3_g = 1$, a contradiction. Therefore, we have  $g \notin V'$. Thus $z^1_g = z^3_g = 0$. Therefore by validity of $(x^1, z^1)$ we have $i \notin I_{h}(x^1, z^1)$ or equivalently $i \in I_{h}(x^2, z^2)$. Without loss of generality, we may assume that $i \in I_{h_3}$. Note that $h_3$ belongs to $V'$ (by the construction of $x^3$ and the fact that $x^3_{ih} = 1$ and $i \in I_{h_3}$). Since $i \in I_g$,  there exists an edge between $g$ and $h_3$. However, since $g \notin V'$ and $h_3 \in V'$, we get a contradiction to the fact that there are no edges between the nodes in $V'$ and the nodes in $(U_1 \cup U_2) \setminus V'$.
\item $g, h \in P$. In this case, we may assume without loss of generality that $i \in I_g(x^1, z^1)$ and $i \in I_h(x^2, z^2)$. Therefore without loss of generality, we may assume that $i \in I_{g_1}$ and $i \in I_{h_3}$. Since $x^3_{ig} = x^3_{ih} = 1$, we have  $g_1 \notin V'$ and $h_3 \in V'$. By assumption on $G'$, this implies that there is no edge between $g_1$ and $h_3$. On the other hand, since $i \in I_{g_1} \cap I_{h_3}$ by (\ref{sanegraph}) we have an edge $(g_1, h_3) \in \mathcal{E}$, a contradiction.
\end{enumerate}
\item Next we verify that the objective function value of $(x^3, z^3)$ is $\rho - \delta$ and that of $(x^4, z^4)$ is $\rho + \delta$ where $\rho$ is the objective function value of the solution $(x^1, z^1)$ and $\delta \in \mathbb{R}$. This result is verified by showing that $(x^3,z^3)$ and $(x^4, z^4)$ are obtained by `symmetrically' updating demands from $(x^1, z^1)$ and $(x^2, z^2)$ respectively. In particular, we examine each demand node and examine the cost of either satisfying it or not satisfying it in each solution. We consider the different cases next:
\begin{enumerate}
\item $j \in R$. Then  $z^4_j = z^3_j = z^1_j = z^2_j = 1$.
\item $j \in V' \cap W_1$. Then $z^1_j = 0$ and $z^3_j=1$. On the other hand  $z^2_j=1$ and $z^4_j=0$. Notice that in each solution where $d_j$ is satisfied, this is done by using the same set of input nodes (and thus using the same arcs).  Therefore the difference in objective function value between $(x^1, z^1)$ and $(x^3, z^3)$ due to demand node $j$ is $-\sum_{i \in I_j}w_{ij} + r_j$ and the difference in objective function value between the solutions $(x^2, z^2)$ and $(x^4, z^4)$ due to demand node $j$ is $\sum_{i \in I_j}w_{ij} - r_j$.
\item $j \in V' \cap W_2$. Similar to the above case the difference in objective function value between $(x^1, z^1)$ and $(x^3, z^3)$ due to demand node $j$ is $\sum_{i \in I_j}w_{ij} - r_j$ and the difference in objective function value between $(x^2, z^2)$ and $(x^4, z^4)$ due to demand node $j$ is $-\sum_{i \in I_j}w_{ij} + r_j$.
\item $j \in F\setminus V' $, then  $z^1_j = z^3_j$ and $x^1_{ij} = x^3_{ij}$ for all $(i, j)\in E$. Similarly $z^2_j = z^4_j$ and $x^2_{ij} = x^4_{ij}$ for all $(i, j)\in E$.
\item $j \in P^2$ such that $j_1, j_2 \in (U_1 \cup U_2) \setminus V'$  and $j_3, j_4 \in (U_1 \cup U_2) \setminus V'$. Then the demand $d_j$ is satisfied by the nodes in $I_{j}(x^1, z^1)$ in $(x^1, z^1)$ and $(x^3, z^3)$. Therefore there is no difference in objective function value between $(x^1, z^1)$ and $(x^3, z^3)$ with respect to demand node $j$. Similarly, the demand $d_j$ is satisfied by the nodes in $I_{j}(x^2, z^2)$ in $(x^2, z^2)$ and $(x^4, z^4)$ and there is no difference in objective function value between $(x^2, z^2)$ and $(x^4, z^4)$ with respect to demand node $j$. We can make a similar argument for $j \in P^1$ such that $j_1 \in (U_1 \cup U_2) \setminus V'$  and $j_3\in (U_1 \cup U_2) \setminus V'$.
\item $j \in P^2$ such that $j_1 \in V'$, $j_2 \in (U_1 \cup U_2) \setminus V'$, $j_3 \in (U_1 \cup U_2) \setminus V'$, $ j_4 \in V'$ without loss of generality. Then the demand $d_j$ is satisfied by the nodes in $(I_{j_1} \cup I_{j_2})$ in $(x^1, z^1)$ and  by nodes $(I_{j_2} \cup I_{j_4})$ in $(x^3, z^3)$. Therefore the difference in objective function value between $(x^1, z^1)$ and $(x^3, z^3)$ with respect to demand node $d_j$ is $\sum_{i \in I_{j_1}}w_{ij} - \sum_{i \in I_{j_4}}w_{ij}$. The demand $d_j$ is satisfied by the nodes in $(I_{j_3} \cup I_{j_4})$ in $(x^2, z^2)$ and  by the nodes in  $(I_{j_1} \cup I_{j_3})$ in $(x^4, z^4)$. Therefore the difference in objective function value between $(x^2, z^2)$ and $(x^4, z^4)$ with respect to demand node $j$ is $\sum_{i \in I_{j_4}}w_{ij} - \sum_{i \in I_{j_1}}w_{ij}$. We can make a similar argument for the cases:  $j_1 \in (U_1 \cup U_2) \setminus V'$, $j_2 \in V'$, $j_3 \in V'$, $ j_4 \in (U_1 \cup U_2) \setminus V'$; $j_1 \in V'$, $j_2 \in (U_1 \cup U_2) \setminus V'$, $j_3 \in V'$, $ j_4 \in (U_1 \cup U_2) \setminus V'$ and $j_1 \in (U_1 \cup U_2) \setminus V'$, $j_2 \in V'$, $j_3 \in (U_1 \cup U_2) \setminus V'$, $ j_4 \in V'$.
\item $j \in P^2$ such that $j_1 \in  V'$, $j_2 \in V'$, $j_3 \in V'$, $ j_4 \in V'$. Then the demand $d_j$ is satisfied by the nodes in $(I_{j_1} \cup I_{j_2})$ in $(x^1, z^1)$ and  by the nodes in  $(I_{j_3} \cup I_{j_4})$ in $(x^3, z^3)$. Therefore, the difference in the objective function value between $(x^1, z^1)$ and $(x^3, z^3)$ with respect to satisfying demand $d_j$ is $\sum_{i \in (I_{j_1} \cup I_{j_2})}(w_{ij} + w_{ij})  - \sum_{i \in (I_{j_3} \cup I_{j_4})}(w_{ij} + w_{ij}) $. The demand $d_j$ is satisfied by the nodes in $(I_{j_3} \cup I_{j_4})$ in $(x^2, z^2)$ and  by the nodes in  $(I_{j_1} \cup I_{j_2})$ in $(x^4, z^4)$. Therefore, the difference in the objective function value between $(x^2, z^2)$ and $(x^4, z^4)$ with regards to satisfying demand $d_j$ is $-\sum_{i \in (I_{j_1} \cup I_{j_2})}(w_{ij} + w_{ij})  + \sum_{i \in (I_{j_3} \cup I_{j_4})}(w_{ij} + w_{ij})$. For $j \in P^1$, we can similarly consider $j_1$ and $j_3$ with $j_1 \in V'$, $j_3 \in V'$.
\end{enumerate}
Therefore, the objective function value of $(x^3, z^3)$ is $\rho - \delta$ and that of $(x^4, z^4)$ is $\rho + \delta$ where $\rho$ is the objective function value of the solution $(x^1, z^1)$ and $(x^2, z^2)$ and $\delta \in \mathbb{R}$.
\item Finally we verify that $\sum_{j \in V_2}z^3_j = k^1 +1$ and $\sum_{j \in V_2}z^4_j = k^2 - 1$. We prove this for $(x^3, z^3)$. The proof is similar for the case of $(x^4, z^4)$. Observe that if $j \in R$, then $z^1_j = z^3_j = 1$. If $j \in P$, then $z^1_j = z^3_j = 0$. If $j \in F\setminus V'$, then $z^1_j = z^3_j$. If $j \in W_1 \cap V'$, then $z^1_j = 0$ and $z^3_j = 1$ and if $j \in W_2 \cap V'$, then $z^1_j = 1$ and $z^3_j = 0$. Thus $\sum_{j \in V_2}z^1_j - \sum_{j \in V_2}z^3_j = |V' \cap W_2| - |V' \cap W_1| =  -1$, where the last equality is by assumption (\ref{property3}) of $G'$. Thus, $\sum_{j \in V_2}z^3_j = k^1 +1$.
\end{enumerate}
\end{proof}
Now the proof of Theorem \ref{conjectureintproperty}  is complete by showing that a subgraph $G'(V',E')$ of $G^*(U_1 \cup U_2, \mathcal{E})$ always exists that satisfies the conditions of Claim \ref{claim1}. In order to prove this, we verify a few results.

\begin{myclaim} \label{claim2}
Connected components of $G^*$  are paths or cycles of even length and all the cycles involve only full nodes.
\end{myclaim}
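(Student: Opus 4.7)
The plan is to bound the degree of every vertex of $G^*$ by $2$, after which the structural claim follows from standard facts about graphs of maximum degree two, combined with the fact that $G^*$ is bipartite.

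First I would analyze the sizes of the sets $I_a$. By construction, if $a \in U_1$ is a partial node (so $a \in \{j_1, j_2\}$ for some $j \in P^2$ or $a = j_1$ for some $j \in P^1$), then $I_a$ is a singleton. If $a \in W_1$ is a full node, then $I_a = I_a(x^1,z^1)$, so $|I_a| = d_a \in \{1,2\}$. The analogous bound holds for $b \in U_2$.

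Next I would argue that each supplier $i \in V_1$ belongs to $I_a$ for at most one $a \in U_1$, and to $I_b$ for at most one $b \in U_2$. This is exactly where $s_i = 1$ enters: in the solution $(x^1,z^1)$, the supplier $i$ sends a unit of flow to at most one market $j \in S(z^1)$; that market contributes $i$ to at most one of the nodes in $U_1$ (the full node $j$ if $j \in W_1$, or a single partial copy $j_1$ or $j_2$ if $j \in P$, by the arbitrary-but-fixed association in the construction). The argument for $U_2$ is identical using $(x^2,z^2)$. Now since an edge $(a,b) \in \mathcal{E}$ requires a common supplier, the degree of $a \in U_1$ is at most $|I_a|$, and similarly for $U_2$. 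Consequently, every vertex of $G^*$ has degree at most $2$; moreover partial nodes and any full node $j$ with $d_j = 1$ have degree at most $1$.

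Finally, I would invoke the elementary fact that a simple graph of maximum degree $2$ is a disjoint union of paths and cycles, and that a bipartite graph contains no odd cycles, so every cycle in $G^*$ has even length. Any vertex lying on a cycle must have degree exactly $2$; by the degree bounds above, such a vertex cannot be a partial node and cannot be a full node with $d_j = 1$. Hence every cycle in $G^*$ involves only full nodes (specifically, full nodes with $d_j = 2$), which completes the proof.

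The only mildly delicate step is the bookkeeping in the second paragraph: one must be careful that the ``splitting'' of $I_j(x^1,z^1)$ into the singletons $I_{j_1}, I_{j_2}$ for $j \in P^2$ does not cause a single supplier $i$ to be double-counted across partial copies of the same market. This is guaranteed by the construction, which associates each supplier with exactly one partial node on each side, so the $s_i = 1$ argument goes through cleanly.
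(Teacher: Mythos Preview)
Your proposal is correct and follows the same approach as the paper: the paper's proof is the single sentence ``This is evident from the fact that $G^*$ is bipartite and degree of $a \in (U_1 \cup U_2)$ is bounded from above by $|I_a|$,'' and your write-up is precisely an unpacking of that sentence---bounding $|I_a|$, using $s_i=1$ to show each supplier appears in at most one $I_a$ on each side (hence $\deg(a)\le|I_a|$), and then invoking the standard structure of graphs with maximum degree two together with bipartiteness. Your additional remark that full nodes with $d_j=1$ also have degree at most one and hence cannot lie on a cycle is a correct refinement that the paper leaves implicit.
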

\begin{proof} This is evident from the fact that $G^*$ is bipartite and degree of $a \in (U_1 \cup U_2)$ is bounded from above by $|I_a|$.\hfill  
\end{proof}
We associate a value $v_j$ to each node $j \in U_1 \cup U_2$. In particular:
\begin{enumerate}
\item If $j \in W_1$, then $v_j = 1$.
\item If $j \in U_1$ and $j$ is a partial node, then $v_j = \frac{1}{2}$.
\item If $j \in U_2$ and $j$ is a partial node, then $v_j = -\frac{1}{2}$.
\item If $j \in W_2$, then $v_j = -1$.
\end{enumerate}
For a { subgraph} $\tilde{G}(\tilde{V}, \tilde{E})$ { of $G^{*}$} we call $v(\tilde{V}) = \sum_{j \in \tilde{V}}v_j$ the \emph{value of the path}.

\begin{myclaim}\label{claim3}
$v(U_1 \cup U_2) = k^2 - k^1 \geq 2$.
\end{myclaim}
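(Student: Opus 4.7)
The plan is to compute $v(U_1 \cup U_2)$ directly by splitting the sum according to the four node types and showing that the partial-node contributions cancel pairwise, leaving only the contribution from full nodes.

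First, I would write
\[
v(U_1 \cup U_2) = \sum_{j \in W_1} v_j + \sum_{\substack{a \in U_1 \\ a \text{ partial}}} v_a + \sum_{\substack{b \in U_2 \\ b \text{ partial}}} v_b + \sum_{j \in W_2} v_j.
\]
By the construction of $G^*$, each $j \in P^1$ contributes exactly one partial node $j_1$ to $U_1$ (value $+\tfrac{1}{2}$) and one partial node $j_3$ to $U_2$ (value $-\tfrac{1}{2}$), while each $j \in P^2$ contributes the two partial nodes $j_1, j_2$ to $U_1$ (total value $+1$) and the two partial nodes $j_3, j_4$ to $U_2$ (total value $-1$). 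Hence the two partial sums cancel, and we are left with
\[
v(U_1 \cup U_2) = |W_1| - |W_2|.
\]

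Next, I would use the definitions $W_1 = S(z^1) \setminus S(z^2)$ and $W_2 = S(z^2) \setminus S(z^1)$. Subtracting $|S(z^1) \cap S(z^2)|$ from each of $|S(z^1)|$ and $|S(z^2)|$ gives
\[
|W_1| - |W_2| = |S(z^1)| - |S(z^2)|.
\]
Finally, $|S(z^t)| = |\{j \in V_2 : z^t_j = 0\}| = |V_2| - \sum_{j \in V_2} z^t_j = |V_2| - k^t$ for $t \in \{1,2\}$, so
\[
v(U_1 \cup U_2) = (|V_2| - k^1) - (|V_2| - k^2) = k^2 - k^1,
\]
and the inequality $k^2 - k^1 \geq 2$ is exactly the hypothesis carried over from Claim \ref{claim0}.

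There is no real obstacle here; the statement is essentially a bookkeeping identity, and the only thing to be careful about is making sure the count of partial copies in $U_1$ matches the count in $U_2$ (which is immediate from the construction: a demand node $j \in P^1$ creates the pair $(j_1, j_3)$, and $j \in P^2$ creates the quadruple $(j_1, j_2, j_3, j_4)$, with equal numbers of copies on each side).
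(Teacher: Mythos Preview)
Your proof is correct and follows essentially the same approach as the paper's: both decompose the sum over $U_1 \cup U_2$ into full and partial nodes, observe that the partial-node contributions cancel pairwise, and reduce to $|W_1| - |W_2| = |S(z^1)| - |S(z^2)| = k^2 - k^1$. Your version is slightly more explicit in writing out $|S(z^t)| = |V_2| - k^t$, but the argument is the same.
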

\begin{proof} $\sum_{j \in U_1 \cup U_2}v_j = \sum_{j \in W_1}v_j + \sum_{j \in P^2} (v_{j_1} + v_{j_2}) + \sum_{j \in P^1} v_{j_1} + \sum_{j \in W_2}v_j + \sum_{j \in P^2} (v_{j_3} + v_{j_4}) + \sum_{j \in P^1} v_{j_3} = |S(z^1) \setminus S(z^2)|  - |S(z^2) \setminus S(z^1)| = |S(z^1)| - |S(z^2)| = k^2 - k^1$.
\end{proof}
\begin{myclaim}\label{claim4}
If $\tilde{G}(\tilde{V},\tilde{E})$ is a cyclic subgraph of $G^*(U_1 \cup U_2, \mathcal{E})$, then $v(\tilde{V}) = 0$.
\end{myclaim}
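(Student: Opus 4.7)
The plan is to combine Claim~\ref{claim2} with the bipartiteness of $G^*$. By Claim~\ref{claim2}, every vertex of a cyclic subgraph $\tilde{G}(\tilde{V}, \tilde{E})$ of $G^*$ is a full node; in particular, no partial nodes appear in $\tilde{V}$. Hence each $j \in \tilde{V}$ belongs either to $W_1$ (contributing $v_j = +1$) or to $W_2$ (contributing $v_j = -1$), and no vertex of value $\pm \tfrac{1}{2}$ shows up in the sum $v(\tilde{V})$.

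The key step is then to exploit the bipartiteness of $G^*(U_1 \cup U_2, \mathcal{E})$. Any cycle in $G^*$ has even length and alternates between the two sides $U_1$ and $U_2$, so its vertex set contains exactly as many elements in $U_1$ as in $U_2$. Combined with the previous observation, $\tilde{V} \cap U_1 \subseteq W_1$ contributes $+|\tilde{V} \cap U_1|$ and $\tilde{V} \cap U_2 \subseteq W_2$ contributes $-|\tilde{V} \cap U_2|$ to $v(\tilde{V})$, and these two quantities cancel, so $v(\tilde{V}) = 0$.

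I do not foresee a serious obstacle here, since this is essentially a direct consequence of Claim~\ref{claim2} together with the bipartite structure of $G^*$. The only mild subtlety is the interpretation of ``cyclic subgraph'': if one reads it strictly as a single cycle, the argument above suffices; if instead one reads it as a subgraph whose connected components are cycles, the same reasoning applies cycle-by-cycle, since each even cycle individually has value zero and values sum over components.
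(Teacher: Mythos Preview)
Your proposal is correct and follows essentially the same route as the paper: invoke Claim~\ref{claim2} to conclude that cycles contain only full nodes, then use the bipartiteness (even length) of $G^*$ to get equal counts from $W_1$ and $W_2$, so the $+1$'s and $-1$'s cancel. Your added remark about reading ``cyclic subgraph'' as a union of cycles is a harmless clarification, but the paper's one-line argument already captures the content.
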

\begin{proof} By Claim \ref{claim2}, a cycle has only full nodes. Moreover, since a cycle is of even length, it contains equal number of nodes from $W_1$ and $W_2$.
\end{proof}

In the rest of this note, when we refer to a path in $G^*$, we refer to a connected component of $G^*$ which is a path (that is any node not belonging to the path does not share an edge with any node of the path).

Note that a partial node must be a leaf node in a path. Using this observation and by some simple case analysis the following three claims can be verified.
\begin{myclaim}\label{claim5}
If $\tilde{G}(\tilde{V}, \tilde{E})$ is a path containing exactly one partial node, then $v(\tilde{V}) \in \{-\frac{1}{2}, \frac{1}{2}\}$.
\end{myclaim}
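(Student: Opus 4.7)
The plan is to begin from the structural observation made just before the claim: a partial node $a$ has $|I_a|=1$, and since each supplier has capacity $1$ and appears in $I_b$ for at most one $b\in U_2$ (respectively, $b\in U_1$), the edge criterion~(\ref{sanegraph}) forces the degree of $a$ in $G^*$ to be at most one. Hence every partial node must be a leaf of whatever path it lies on. So in our path $\tilde G$ the unique partial node is one of the two endpoints, while the other endpoint and all interior vertices are full.

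Next I would exploit that $G^*$ is bipartite, so $\tilde G$ alternates between $U_1$ and $U_2$, and therefore $|\tilde V \cap U_1|$ and $|\tilde V \cap U_2|$ differ by at most one (with equality precisely when $n := |\tilde V|$ is even, forcing the two endpoints to lie on opposite sides). Recalling the definition of $v_j$, full nodes of $U_1$ contribute $+1$, full nodes of $U_2$ contribute $-1$, partial nodes of $U_1$ contribute $+\tfrac12$, and partial nodes of $U_2$ contribute $-\tfrac12$.

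A short case analysis on (i) the side containing the unique partial vertex and (ii) the parity of $n$ then finishes the proof. To illustrate one case: if the partial vertex lies in $U_1$ and $n$ is odd, both endpoints are in $U_1$, giving $(n+1)/2$ vertices in $U_1$ (one partial plus $(n-1)/2$ full) and $(n-1)/2$ full vertices in $U_2$, so $v(\tilde V) = \tfrac{n-1}{2} + \tfrac12 - \tfrac{n-1}{2} = \tfrac12$. The other three combinations are handled by the same bookkeeping and yield values in $\{-\tfrac12,+\tfrac12\}$. I do not anticipate any real obstacle: the bipartiteness and the leaf-property of partial nodes completely determine the side-by-side vertex counts, making the case analysis mechanical; the only substantive ingredient is the degree bound on partial nodes, which is immediate from the construction of the conflict graph.
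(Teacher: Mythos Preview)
Your proposal is correct and follows essentially the same approach as the paper. The paper itself only remarks that a partial node must be a leaf and that ``by some simple case analysis'' the claim follows; your write-up merely makes that case analysis explicit via the bipartite alternation and the parity of $|\tilde V|$, which is exactly what the authors intend.
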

\begin{myclaim}\label{claim6}
If $\tilde{G}(\tilde{V}, \tilde{E})$ is a path containing two partial nodes, then $v(\tilde{V}) = 0$.
\end{myclaim}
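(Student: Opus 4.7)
The plan is to prove Claim 6 by a short case analysis that exploits bipartiteness of $G^{*}$ together with the key structural fact already noted in the text: a partial node $a$ satisfies $|I_{a}|=1$, hence its degree in $G^{*}$ is at most $1$, so any partial node appearing on a path must be a leaf (endpoint) of that path. Consequently, if $\tilde{G}(\tilde{V},\tilde{E})$ is a path containing exactly two partial nodes, these two nodes must be precisely the two endpoints of the path, and all interior nodes are full.

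From here I split on which side of the bipartition the two endpoints lie, giving three cases. In each case, since $G^{*}$ is bipartite and the path alternates between $U_{1}$ and $U_{2}$, the number of nodes on each side is determined by the total length $n$ of the path and by the sides of the endpoints. I then sum the values $v_{j}$ using: full nodes in $W_{1}$ contribute $+1$, full nodes in $W_{2}$ contribute $-1$, partial nodes in $U_{1}$ contribute $+\tfrac{1}{2}$, and partial nodes in $U_{2}$ contribute $-\tfrac{1}{2}$.

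\textbf{Case A: both endpoints are partial nodes in $U_{1}$.} Then $n$ is odd, with $(n+1)/2$ nodes in $U_{1}$ (two partial, the rest full in $W_{1}$) and $(n-1)/2$ nodes in $U_{2}$ (all full in $W_{2}$). Summing,
\[
v(\tilde{V}) \;=\; 2\cdot\tfrac{1}{2} + \left(\tfrac{n+1}{2}-2\right)\cdot 1 + \tfrac{n-1}{2}\cdot(-1) \;=\; 1+\tfrac{n-3}{2}-\tfrac{n-1}{2} \;=\; 0.
\]
\textbf{Case B: both endpoints are partial nodes in $U_{2}$.} By the mirror computation,
\[
v(\tilde{V}) \;=\; 2\cdot(-\tfrac{1}{2}) + \tfrac{n-1}{2}\cdot 1 + \left(\tfrac{n+1}{2}-2\right)\cdot(-1) \;=\; 0.
\]
\textbf{Case C: one partial endpoint in $U_{1}$ and one in $U_{2}$.} Then $n$ is even with $n/2$ nodes on each side. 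The two partial endpoints contribute $\tfrac{1}{2}-\tfrac{1}{2}=0$, while the $n/2-1$ interior $W_{1}$-nodes and the $n/2-1$ interior $W_{2}$-nodes cancel, giving $v(\tilde{V})=0$.

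Since these three cases exhaust the possibilities, $v(\tilde{V})=0$ in every case, completing the claim. The only potentially delicate step is Case~A (and by symmetry Case~B), where one must be careful not to off-by-one the count of interior $U_{1}$-nodes; this is handled by subtracting the two partial endpoints from $(n+1)/2$ before multiplying by $+1$. No new ideas beyond the leaf observation and elementary counting are required.
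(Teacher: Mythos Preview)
Your proof is correct and follows precisely the approach the paper indicates: the paper states only that ``a partial node must be a leaf node in a path'' and that Claims~5--7 then follow ``by some simple case analysis,'' and your argument supplies exactly that case analysis in full detail. No differences in approach to report.
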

\begin{myclaim}\label{claim7}
If $\tilde{G}(\tilde{V}, \tilde{E})$ is a path containing only full nodes, then $v(\tilde{V}) \in \{-1, 0, 1\}$.
\end{myclaim}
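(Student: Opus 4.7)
The plan is to exploit the fact that $G^{*}$ is bipartite with parts $U_{1}$ and $U_{2}$, so a path in $G^{*}$ must alternate between $U_{1}$ and $U_{2}$. Since all nodes of $\tilde{V}$ are full by hypothesis, every node of $\tilde{V}\cap U_{1}$ belongs to $W_{1}$ (and so contributes $v_{j}=+1$) and every node of $\tilde{V}\cap U_{2}$ belongs to $W_{2}$ (and so contributes $v_{j}=-1$). Therefore $v(\tilde{V})=|\tilde{V}\cap W_{1}|-|\tilde{V}\cap W_{2}|$, where the two quantities arise from alternating positions along the path.

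First I would split on the parity of $|\tilde{V}|$. If the path has an even number of vertices, say $|\tilde{V}|=2\ell$, then alternation forces $|\tilde{V}\cap W_{1}|=|\tilde{V}\cap W_{2}|=\ell$, which gives $v(\tilde{V})=0$. If the path has an odd number of vertices, say $|\tilde{V}|=2\ell+1$, then one endpoint and the other endpoint lie in the same part; hence either $|\tilde{V}\cap W_{1}|=\ell+1$ and $|\tilde{V}\cap W_{2}|=\ell$, giving $v(\tilde{V})=+1$, or $|\tilde{V}\cap W_{1}|=\ell$ and $|\tilde{V}\cap W_{2}|=\ell+1$, giving $v(\tilde{V})=-1$. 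In either case $v(\tilde{V})\in\{-1,0,1\}$.

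The only subtle point one should verify is that $\tilde{V}$ really cannot contain partial nodes in this claim, but this is given in the hypothesis; and conversely, that a path consisting only of full nodes is admissible in $G^{*}$, which follows from Claim \ref{claim2} (paths and even cycles are the only connected components, and cycles involve only full nodes, so paths of full nodes exist as well). There is no real obstacle here: the argument is a one-line parity computation on a bipartite path once one records that $v_{j}$ depends solely on whether $j\in W_{1}$ or $j\in W_{2}$, and this mirrors exactly the case analysis used (implicitly) for Claims \ref{claim5} and \ref{claim6}.
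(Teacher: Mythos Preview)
Your argument is correct and is precisely the ``simple case analysis'' the paper alludes to but does not spell out: since $G^{*}$ is bipartite and all nodes of $\tilde{V}$ are full, the path alternates between $W_{1}$ (value $+1$) and $W_{2}$ (value $-1$), so $v(\tilde{V})=|\tilde{V}\cap W_{1}|-|\tilde{V}\cap W_{2}|\in\{-1,0,1\}$ by parity. The remark in your final paragraph about verifying that such paths are ``admissible'' is unnecessary, as the claim is conditional and does not require existence.
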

\noindent For the subgraph {$\tilde{G}(\tilde{V}, \tilde{E})$, consider a $k \in \tilde{V} \setminus F$} such that  $k = j_t$ where $t \in \{1, 2, 3, 4\}$ and $j \in P^2$.  Suppose $k = j_1$ or $j_2$, then we say that a path $\check{G}(\check{V}, \check{E})$ is a \emph{mirror path} for $j$, if $\check{V}$ contains either $j_3$ or $j_4$. Moreover we call one of $j_3$ or $j_4$ (whichever belongs to $\check{V}$ or arbitrarily select one of these if both belong to $\check{V}$) as the \emph{mirror node}. Similarly if $k = j_3$ or $j_4$, then we say that a path $\check{G}(\check{V}, \check{E})$ is a \emph{mirror path} for $j$, if $\check{V}$ contains either $j_1$ or $j_2$. \emph{Mirror node} is similarly defined in this case. For $j \in P^1$ we consider $k=j_1$ and $k=j_3$. Suppose $k=j_1$, then we say that a path $\check{G}(\check{V}, \check{E})$ is a \emph{mirror path} for $j$, if $\check{V}$ contains $j_3$ and we call $j_3$ the \emph{mirror node}. Similarly if $k=j_3$, then we say that a path $\check{G}(\check{V}, \check{E})$ is a \emph{mirror path} for $j$, if $\check{V}$ contains $j_1$ and we call $j_1$ the \emph{mirror node}.

Algorithm \ref{algotable} constructs $G'(V', E')$ that satisfies all the properties of Claim \ref{claim1}. We next verify that Algorithm \ref{algotable} is well-defined, that is all the steps can be carried out. Moreover, we show that the algorithm  generates a subgraph $G'(V', E')$ that satisfies the conditions of Claim \ref{claim1}.

\begin{algorithm}                      
\caption{Construction of $G'(V', E')$}           
\label{algotable}                      
\noindent \textbf{Input:} $G^*(U_1 \cup U_2, \mathcal{E})$.\\
\noindent  \textbf{Output:} $G'(V', E')$ that satisfies all conditions of Claim \ref{claim1}.\\
\begin{enumerate}
      \item \label{cleansol}If there exists a path $\tilde{G}(\tilde{V}, \tilde{E})$ in $G^*(U_1 \cup U_2, \mathcal{E})$ containing only full nodes with $v(\tilde{V}) = 1$, then set $G':= \tilde{G}$. STOP.

      \item Tag all paths in $G^*(U_1 \cup U_2, \mathcal{E})$ as `unmarked.'

      \item \label{atzero}Select a path $\tilde{G}(\tilde{V}, \tilde{E})$ from the set of `unmarked' paths containing a partial node  such that $v(\tilde{V}) = \frac{1}{2}$. Tag this path as `marked.' Note that by Claim \ref{claim5} and Claim \ref{claim6},  $\tilde{V}$ contains a unique partial node $j^{*}$.	

      \item \label{athalf} Select a path from the list of `unmarked' paths, such that it is a mirror path for $j^{*}$. Tag this path as `marked.'

      \item \label{decision} There are three cases:
	       \begin{enumerate}
	       \item The mirror path tagged as `marked' in (\ref{athalf}) contains a unique partial node and its value is $\frac{1}{2}$. \newline GO TO Step \ref{laststep}.
	       \item The mirror path tagged as `marked' in (\ref{athalf}) contains a unique partial node and its value is $-\frac{1}{2}$. \newline GO TO Step \ref{atzero}.
	       \item The mirror path tagged as `marked' in (\ref{athalf}) contains two partial nodes (then its value is $0$): \newline One of the partial nodes {corresponds to} the mirror node. Set $j^{*}$ to be the other partial node. GO TO Step \ref{athalf}.
	       \end{enumerate}

       \item \label{laststep} Set $G'(V', E')$ to be disjoint union of the paths tagged as `marked.' STOP.
\end{enumerate}
\end{algorithm}

\begin{myclaim}\label{claim8}
Algorithm \ref{algotable} is well-defined.
\begin{enumerate}
\item At the beginning of Step (\ref{atzero}), the total value of all marked paths is $0$.
\item Let $\hat{V} := \bigcup_{ \tilde{G}(\tilde{V}, \tilde{E}) \textup{ is marked before Step (\ref{atzero})} } \tilde{V} $. Then $|\hat{V} \cap \{j_1, j_2 \}| = |\hat{V} \cap \{j_3, j_4 \}|$ for all $j \in P^2$ and $|\hat{V} \cap \{j_1\}| = |\hat{V} \cap \{j_3\}|$ for all $j \in P^1$.
\item Step (\ref{atzero}) is well-defined, that is as long as the algorithm does not terminate, Step (\ref{atzero}) can be carried out.
\item At the end of Step (\ref{atzero}), the total value of all marked paths is $\frac{1}{2}$.
\item Step (\ref{athalf}) is well-defined, that is as long as the algorithm does not terminate, Step (\ref{athalf}) can be carried out.
\end{enumerate}

\end{myclaim}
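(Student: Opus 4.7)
The plan is to prove all five parts of Claim~\ref{claim8} simultaneously by induction on the number of visits the algorithm makes to Step~(\ref{atzero}) or Step~(\ref{athalf}), maintaining two invariants. Invariant \emph{(I)} will say: at every entry to Step~(\ref{atzero}) the total value of marked paths is $0$ and the partial-copy balance $|\hat V\cap\{j_1,j_2\}|=|\hat V\cap\{j_3,j_4\}|$ (resp.\ $|\hat V\cap\{j_1\}|=|\hat V\cap\{j_3\}|$) holds for every $j\in P^2$ (resp.\ $j\in P^1$). Invariant \emph{(II)} will say: at every entry to Step~(\ref{athalf}) the total value of marked paths is $\tfrac12$ and the same balance holds for every $j\in P$ except for the unique $j$ owning the current $j^*$, for which the $j^*$-side count exceeds the mirror-side count by exactly one. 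Parts~1,~2, and~4 are immediate consequences of these invariants, and parts~3 and~5 are the existence statements I address separately.

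For the inductive step I would first check that marking a Step~(\ref{atzero}) path of value $\tfrac12$ produces \emph{(II)}: Claims~\ref{claim5}--\ref{claim7} force such a path to contain exactly one partial node (all-full paths have value in $\{-1,0,1\}$ and two-partial paths have value $0$), and this node is precisely $j^*$; the balance for $j$'s group is shifted by one on the $j^*$-side while all other groups remain balanced. Then I would analyze Step~(\ref{decision}): case~(a) marks a second path of value $\tfrac12$ whose unique partial is the mirror node of $j^*$, raising the total to $1$ and terminating the algorithm; case~(b) marks a path of value $-\tfrac12$ whose unique partial is the mirror node, so the total returns to $0$ and the balance is fully restored, reinstating \emph{(I)}; case~(c) marks a path of value $0$ whose two partial nodes are the mirror node of $j^*$ (restoring $j$'s balance) and a fresh partial $\ell$, which becomes the new $j^*$, so the total remains $\tfrac12$ and \emph{(II)} holds with the imbalance transferred to $\ell$'s owner. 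Initialization is trivial because $\hat V=\emptyset$ gives \emph{(I)} at the first entry to Step~(\ref{atzero}).

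The main obstacle is the well-definedness of Step~(\ref{atzero}). Here I would use Claim~\ref{claim3} to note that $\sum_C v(C)=k^2-k^1\geq 2$ over the connected components $C$ of $G^*$, Claim~\ref{claim4} to discard the cycles (value $0$), and the Step~(\ref{cleansol}) termination check to rule out any all-full path of value $1$. Combined with Claims~\ref{claim5}--\ref{claim7}, every path then has value in $\{-1,-\tfrac12,0,\tfrac12\}$. By \emph{(I)} the marked paths contribute $0$, so the unmarked paths must sum to at least $2$; since no single path exceeds $\tfrac12$, at least one unmarked path has value $\tfrac12$, and by the value classification such a path automatically contains a (unique) partial node, so it qualifies as the choice required in Step~(\ref{atzero}). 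Well-definedness of Step~(\ref{athalf}) is easier: \emph{(II)} guarantees an imbalance of one on $j^*$'s group, so some mirror candidate (one of $j_3,j_4$ when $j\in P^2$, or $j_3$ when $j\in P^1$) is absent from $\hat V$, and since every partial node has $|I_\cdot|=1$ and therefore degree at most one in $G^*$, it sits in a (possibly trivial) path of $G^*$, which must be unmarked.

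The most delicate bookkeeping, which I would handle carefully, is the invariant transfer in case~(c) of Step~(\ref{decision}): after the mirror path with two partial nodes $(k,\ell)$ is marked, I must verify that $k$ is truly the mirror node of $j^*$ and that $\ell$'s owner $j'$ was balanced prior to the mark, so that the reassignment $j^*:=\ell$ places the new imbalance exactly on $\ell$'s side and reinstates \emph{(II)}. I would also verify that successive mirror chases never revisit a previously marked path, using the disjointness of connected components together with the fact that each chase consumes a previously unmarked partial copy. With these checks in place the induction closes and all five parts follow.
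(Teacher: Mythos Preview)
Your proposal is correct and follows essentially the same inductive strategy as the paper, which inducts on visits to Step~(\ref{decision}) and tracks the same two pieces of data (total value of marked paths and partial-copy balance) that you package as invariants \emph{(I)} and \emph{(II)}. One minor point to watch in case~(c): the second partial node $\ell$ of the mirror path can have the \emph{same} owner $j$ as $j^*$ (e.g.\ the mirror path could contain $j_3$ and $j_4$, or $j_3$ and $j_2$), so ``restoring $j$'s balance'' is not literally what happens there---but your invariant \emph{(II)} still holds verbatim in that situation, so the argument goes through once you phrase it in terms of the invariant rather than ``transfer of imbalance.''
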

\begin{proof} We prove Claim \ref{claim8} by induction on the iteration number ($n$) of the algorithm visiting Step (\ref{decision}). When $n = 0$:
\begin{enumerate}
\item At the beginning of Step (\ref{atzero}) there are no `marked' paths and therefore the total value of all marked paths is $0$.
\item $\hat{V} = \emptyset$.
\item By Step (\ref{cleansol}), we know that there exists no path containing only full nodes with $v(\tilde{V}) = 1$. Moreover by Claim \ref{claim3} we have  $v(U_1 \cup U_2) \geq 2$. Since by Claim \ref{claim4} all cycles have a value of $0$, there must exist at least one path with partial nodes with positive value. Since this is only possible (Claim \ref{claim5},    Claim \ref{claim6} and Claim \ref{claim7}) if there exists exactly one partial node in the path, we see that Step (\ref{atzero}) is well-defined.
\item At Step (\ref{atzero}) one path is marked which has a value of half.
\item Since one path is tagged as marked in Step (\ref{atzero}), it contains exactly one partial node, $j^*\in P$. Suppose that $j^* \in P^2$ and  $j^* = {j^{*}_i}$ for some $i \in \{1, \dots, 4\}$. Then there exists paths (at least two) which contain the other three partial nodes corresponding to $j^*$. If $j^*\in P^1$ then there exists one path which contains the other partial node. Therefore this step is well-defined.
\end{enumerate}

Now for any $n \in \mathbb{Z}_{+}$, assuming by the induction hypothesis that the result is true for $n' = 0, \dots, n - 1$:
\begin{enumerate}
\item Step (\ref{atzero}) is arrived at via Step (\ref{decision}b). Let $n' < n$ be the last iteration when Step (\ref{atzero}) is invoked. By the induction hypothesis, the total value of all the marked paths at the end of Step (\ref{atzero}) in iteration $n'$ is $\frac{1}{2}$.
From iterations $n' + 1, \dots, n -1$, the algorithm alternates between Step (\ref{athalf}) and Step (\ref{decision}c). The total value of all the marked paths here is $0$. Finally, the value of the  last path tagged as marked in Step (\ref{athalf}) is $-\frac{1}{2}$ (since the algorithm invokes Step (\ref{decision}b)). Hence, the total value of all the marked paths is $0$ at the beginning of Step (\ref{atzero}) in iteration $n$.
\item \label{pickboth} Let $n' < n$ be the last iteration when Step (\ref{atzero}) is invoked. By the induction hypothesis $|\hat{V} \cap \{j_1, j_2 \}| = |\hat{V} \cap \{j_3, j_4 \}|$ for all $j \in P^2$ and $|\hat{V} \cap \{j_1\}| = |\hat{V} \cap \{j_3\}|$ for all $j \in P^1$ where \newline $\hat{V} := \bigcup_{ \tilde{G}(\tilde{V}, \tilde{E}) \textup{ is marked before Step (\ref{atzero}) \textup{ iteration } n' } } \tilde{V} $. From iterations $n' + 1, \dots, n -1$, the algorithm alternates between Step (\ref{athalf}) and Step (\ref{decision}c). Since in iteration $n -1$ at Step (\ref{athalf}), we add one path that contains only the mirror node to $j^{*}$ (the unique partial node from the previous iteration), we arrive at this result.

\item Proof is the same as that in the case where $n = 0$.
\item The total value of paths at the end of Step (\ref{atzero}) = value of marked path  + total value of previously marked path $= \frac{1}{2} + 0 $.
\item Step (\ref{athalf}) is invoked after either Step (\ref{atzero}) or Step (\ref{decision}c). In case we arrive via Step (\ref{atzero}), by the induction hypothesis, $|\hat{V} \cap \{j_1, j_2 \}| = |\hat{V} \cap \{j_3, j_4 \}|$ for all $j \in P^2$ and $|\hat{V} \cap \{j_1\}| = |\hat{V} \cap \{j_3\}|$ for all $j \in P^1$ where $\hat{V} := \bigcup_{ \tilde{G}(\tilde{V}, \tilde{E}) \textup{ is marked before Step (\ref{atzero}) \textup{ iteration } n' } } \tilde{V} $. Moreover the path marked in Step (\ref{atzero}) contains exactly a unique partial node $j^{*}$, then there must exist an unmarked path containing a mirror node to $j^{*}$. In case of we arrive via Step (\ref{decision}c), again the proof is essentially the same by observing that at the start of Step (\ref{athalf}), there is a unique partial node $j^{*}$ that is not paired with a mirror partial node.

\end{enumerate}
\end{proof}
\begin{myclaim}\label{claim9}
Algorithm \ref{algotable} terminates in finite time.
\end{myclaim}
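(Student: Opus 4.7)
The plan is a simple finiteness argument. Since $V_1$ and $V_2$ are finite, so is the node set $U_1 \cup U_2$ of $G^*$, and by Claim~\ref{claim2} every connected component of $G^*(U_1 \cup U_2, \mathcal{E})$ is either a cycle or a path; in particular, $G^*$ contains only finitely many paths.

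The core observation I would invoke is that every execution of Step~\ref{atzero} or Step~\ref{athalf} tags exactly one previously unmarked path as marked, and no step in the algorithm ever reverses such a tag. Consequently, the cumulative number of executions of these two steps throughout any run is bounded above by the (finite) number of paths of $G^*$. Since the only steps that keep the algorithm in progress (short of terminating) are Step~\ref{atzero} and Step~\ref{athalf}, the total number of iterations is finite. In particular, the potentially worrying alternations between Step~\ref{athalf} and branch (c) of Step~\ref{decision}, and between Step~\ref{atzero} and branch (b) of Step~\ref{decision}, cannot loop forever because each pass through Step~\ref{athalf} or Step~\ref{atzero} consumes a fresh unmarked path.

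To finish, I would appeal to Claim~\ref{claim8}, which guarantees that whenever Step~\ref{atzero} or Step~\ref{athalf} is reached and the algorithm has not yet stopped, the required unmarked path does exist; so the algorithm cannot become stuck before termination. Combined with the marking bound above, this forces the algorithm to exit either at Step~\ref{cleansol} or at Step~\ref{laststep} (via branch (a) of Step~\ref{decision}) after finitely many iterations. I do not anticipate any substantive obstacle here; the real work was done in Claim~\ref{claim8}, and termination reduces to bookkeeping on the finite pool of paths in $G^*$.
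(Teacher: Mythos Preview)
Your argument is correct and is essentially the same as the paper's: the paper's one-line proof simply observes that there are finitely many paths (it says ``finite number of edges'') and that each iteration tags at least one previously unmarked path as marked, which is exactly your marking-bound argument. Your additional appeal to Claim~\ref{claim8} to rule out the algorithm getting stuck is a reasonable clarification but is not needed for termination per se, and the paper omits it.
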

\begin{proof} This is true since there are a finite number of edges and at each iteration of the algorithm at least one unmarked path is tagged as marked.
\end{proof}
\begin{myclaim}\label{claim10}
Algorithm \ref{algotable} generates  a subgraph $G'(V', E')$  that satisfies the properties of Claim \ref{claim1}.
\end{myclaim}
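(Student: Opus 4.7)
The approach is to verify the three properties of Claim \ref{claim1} at termination by splitting on the exit point of Algorithm \ref{algotable}: either Step (\ref{cleansol}), or Step (\ref{laststep}) (reachable only via exit (\ref{decision}a)). By Claim \ref{claim9} the algorithm terminates, and by Claim \ref{claim8} every step is well-defined, so these are the only two cases. In both cases Property 1 is immediate since $V'$ is a disjoint union of whole connected components of $G^{*}$ (``paths'' in the sense of the text) and so no edge of $G^{*}$ can cross from $V'$ to its complement.

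If the algorithm terminates at Step (\ref{cleansol}), then $G' = \tilde G$ is a single full-node path with $v(\tilde V) = 1$. Property 2 is vacuous because $V'$ contains no partial nodes, and Property 3 follows directly from $v(\tilde V) = |W_1 \cap V'| - |W_2 \cap V'| = 1$.

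The interesting case is termination at Step (\ref{laststep}). I would establish Property 2 by combining Claim \ref{claim8}(2) with a direct analysis of the last outer cycle: at the start of the final Step (\ref{atzero}) balance of partial nodes holds; Step (\ref{atzero}) unbalances exactly one index $j \in P$ by introducing a single partial node $j^*$; each invocation of Step (\ref{decision}c) preserves the invariant ``exactly one unmatched partial index,'' shifting it from the previous $j^{*}$ (whose mirror node sits in the newly marked path) to a new index $j''$ determined by the second partial of that path; and exit (\ref{decision}a) closes off the current $j^{*}$ by marking a path whose unique partial is the mirror of $j^{*}$, thereby restoring balance for every $j \in P$. For Property 3, I would track the running value $v(V')$ of the already-marked paths through the algorithm using Claims \ref{claim5}, \ref{claim6} and \ref{claim8}(1): Step (\ref{atzero}) contributes $+\tfrac12$, each Step (\ref{decision}c) contributes $0$, each Step (\ref{decision}b) contributes $-\tfrac12$ (returning the running total to $0$ before the next Step (\ref{atzero})), and exit (\ref{decision}a) contributes $+\tfrac12$ and terminates; hence $v(V') = 1$ at the moment of termination. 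Combining with Property 2 (which forces the partial-node contributions to $v(V')$ to cancel pairwise within each $j \in P$), we conclude $|W_1 \cap V'| - |W_2 \cap V'| = v(V') = 1$, giving Property 3.

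The main obstacle is the bookkeeping inside the (\ref{athalf})--(\ref{decision}c) loop: one must justify that the ``unmatched'' partial index $j^{*}$ is transferred intact by Step (\ref{decision}c) (not lost, not duplicated), so that exit (\ref{decision}a), when reached, genuinely closes off the very last unmatched partial index and leaves all $j \in P^1 \cup P^2$ balanced. This is essentially the same inductive skeleton used in the proof of Claim \ref{claim8}, so formalizing it requires no new ideas; once recorded, Properties 1, 2 and 3 follow by direct combination of the established invariants.
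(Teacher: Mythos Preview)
Your proposal is correct and follows essentially the same route as the paper's proof: Property~1 from $V'$ being a union of whole connected components; Property~2 from Claim~\ref{claim8}(2) at the last visit to Step~(\ref{atzero}) together with the observation that the chain of Step~(\ref{athalf})/(\ref{decision}c) iterations passes a single ``unmatched'' partial index along until exit (\ref{decision}a) closes it; and Property~3 from $v(V')=1$ combined with the pairwise cancellation of partial-node values guaranteed by Property~2. You are slightly more explicit than the paper in two places---the case split on termination at Step~(\ref{cleansol}) versus Step~(\ref{laststep}), and the running-value bookkeeping yielding $v(V')=1$---but these are exactly the facts the paper invokes via Claim~\ref{claim8} and the phrase ``it is easily verified.''
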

\begin{proof} First observe that since the output $G'(V', E')$ of the algorithm is a disjoint union of paths, there exists no edge between $V'$ and $(U_1 \cup U_2)\setminus V'$ in $\mathcal{E}$, so property \ref{property1} is satisfied.

By Claim \ref{claim8}, 2. we have  $|\hat{V} \cap \{j_1, j_2 \}| = |\hat{V} \cap \{j_3, j_4 \}|$ for all $j \in P^2$ and $|\hat{V} \cap \{j_1\}| = |\hat{V} \cap \{j_3\}|$ for all $j \in P^1$ where $$\hat{V} := \bigcup_{ \tilde{G}(\tilde{V}, \tilde{E}) \textup{ is marked before Step (\ref{atzero})} } \tilde{V}.$$
Therefore, it is easily verified that in the last iteration before termination, a path with a unique partial node, which is a mirror node to $j^{*}$, is marked in Step (\ref{athalf}). This is because before termination we arrive at Step (\ref{decision}a) implying that the value of the path marked in Step (\ref{athalf}) is $\frac{1}{2}$. Hence Claim \ref{claim5} and Claim \ref{claim6} imply that there is a unique partial node in this path. Thus, $|V' \cap \{j_1, j_2 \}| = |V' \cap \{j_3, j_4 \}|$ for all $j \in P^2$ and $|V' \cap \{j_1\}| = |V' \cap \{j_3\}|$ for all $j \in P^1$, so property \ref{property2} is satisfied.

Finally, since $v(V') = 1$ and $|V' \cap \{j_1, j_2 \}| = |V' \cap \{j_3, j_4 \}|$ for all $j \in P^2$ and $|V' \cap \{j_1\}| = |V' \cap \{j_3\}|$ for all $j \in P^1$ we have $$\sum_{j \in V' \cap W_1}v_j + \sum_{j \in V' \cap W_2}v_j = 1.$$ As a result, $|V' \cap W_1| = |V' \cap W_2| + 1$, so property \ref{property3} is satisfied.
\end{proof}

We showed that the set of solutions to TPMC satisfies the edge property. Theorem \ref{conjectureintproperty} then follows from Proposition \ref{edgproperty-convexhull}.

 \section*{Acknowledgements.} Pelin Damc\i-Kurt and Simge K\"u\c{c}\"ukyavuz are supported, in part,  by NSF-CMMI grant 1055668.  Santanu S. Dey gratefully acknowledges the support of the AIR Force Office of Scientific Research grant FA9550-12-1-0154.

\bibliographystyle{plain}
\bibliography{reference}

\section*{Appendix A}

\begin{proposition} Theorem \ref{conjectureintproperty} implies the result that the matching polytope intersected with a cardinality constraint is integral.
\end{proposition}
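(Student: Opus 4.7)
The plan is to derive this classical result as an immediate consequence of Theorem \ref{conjectureintproperty}, using exactly the reduction set up in Observation \ref{mathobs}. Given $G(V,E)$ with $|E| = m$, I first construct the bipartite graph $\hat G(\hat V^1 \cup \hat V^2, \hat E)$ and the set $X := Q$ defined by (\ref{match1})-(\ref{match3}). Since $X$ is the feasible region of a simple TPMC instance with $s_i = 1$ for all $i \in \hat V^1$ and $d_j = 2$ for all $j \in \hat V^2$, Theorem \ref{conjectureintproperty} applies directly.

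The bridge from TPMC to the matching polytope is the affine map $\pi(x,z) := e - z$, where $e$ is the all-ones vector in $\mathbb{R}^m$. By Observation \ref{mathobs}, $\pi$ is a bijection between $X$ and the set of matching incidence vectors of $G$, so by linearity $\pi(\textup{conv}(X))$ equals the matching polytope $M(G)$. Under $y = e - z$, the matching cardinality constraint $\sum_{e \in E} y_e \leq k$ becomes $\sum_{e \in E} z_e \geq m - k$, which is of the form explicitly addressed in the $\geq$ variant of Theorem \ref{conjectureintproperty} mentioned right after its statement. Invoking that variant with $k' := m - k$ yields
\[
\textup{conv}\bigl(X \cap \{\textstyle\sum_{e} z_e \geq k'\}\bigr) \;=\; \textup{conv}(X) \cap \{\textstyle\sum_{e} z_e \geq k'\}.
\]

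I then push this equality through $\pi$. Since $\pi$ is affine, $\pi(\textup{conv}(A)) = \textup{conv}(\pi(A))$ for any set $A$, so the left-hand side maps to $\textup{conv}\{\chi_M : M \text{ is a matching in } G \text{ with } |M| \leq k\}$, which is exactly the matching polytope with cardinality restriction that we want to show is integral. For the right-hand side, the added inequality involves only the $z$-coordinates that $\pi$ inverts, so a short pointwise argument shows $\pi(\textup{conv}(X) \cap \{\sum_e z_e \geq m - k\}) = M(G) \cap \{\sum_e y_e \leq k\}$. Equating the two sides yields the integrality of $M(G) \cap \{\sum_e y_e \leq k\}$.

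The only mildly subtle step, and the main obstacle in the plan, is verifying that $\pi$ commutes with the intersection on the right-hand side, since projection and intersection do not commute in general. This is resolved by noting that for any $y \in M(G) \cap \{\sum_e y_e \leq k\}$ there exists $(x, z) \in \textup{conv}(X)$ with $z = e - y$ (so $\sum_e z_e \geq m - k$ automatically), while the reverse inclusion is immediate. With this verification in hand, the proposition follows in essentially one line from Theorem \ref{conjectureintproperty}, and the same argument handles the $\geq k$ and $= k$ variants verbatim.
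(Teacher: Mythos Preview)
Your proposal is correct and follows essentially the same route as the paper: both invoke the $\geq$ variant of Theorem~\ref{conjectureintproperty} with threshold $m-k$, translate via the affine substitution $y = e - z$, and then argue that the resulting affine image commutes with both convex hull and with intersection by the cardinality constraint (the latter because that constraint involves only the $z$-coordinates). The only cosmetic difference is that the paper introduces the auxiliary set $H$ with an explicit $y$-coordinate and projects, whereas you work directly with the map $\pi(x,z)=e-z$; these are equivalent formulations of the same argument.
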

\begin{proof} We use the notation introduced in Observation \ref{mathobs}. Let $e$ be the all ones vector in $\mathbb{R}^m$. Therefore we need to prove that the matching polytope intersected with a cardinality constraint is integral, i.e.,
\begin{eqnarray}
\textup{conv}\left( \textup{proj}_y(H) \cap \left\{y \,|\, e^{\top}y \leq k \right\} \right) = \textup{conv}\left( \textup{proj}_y(H)\right) \cap \left\{y \,|\, e^{\top}y\leq k \right\},
\end{eqnarray}
using Theorem \ref{conjectureintproperty}.

By Theorem \ref{conjectureintproperty}, we have
\begin{eqnarray}
\label{thmagain} \textup{conv}\left(Q \cap \left\{(x,z) \,|\, e^{\top}z \geq m - k \right\} \right) = \textup{conv}\left( Q\right) \cap \left\{(x,z) \,|\, e^{\top}z\geq m - k \right\}.
\end{eqnarray}
By appending the constraints $y = e -z $ on both sides of (\ref{thmagain}), we obtain
\begin{eqnarray}
\label{thmagain1} \textup{conv}\left(H \cap \left\{(x,z, y) \,|\, e^{\top}z \geq m-k \right\} \right) = \textup{conv}\left(H \right) \cap \left\{(x,z, y) \,|\, e^{\top}z\geq m-k \right\},
\end{eqnarray}
Since $ y = e - z$ in the definition of $H$, (\ref{thmagain1}) is equivalent to
\begin{eqnarray}
\label{thmagain2} \textup{conv}\left(H \cap \left\{(x,z, y) \,|\, e^{\top}y \leq k \right\} \right) &=& \textup{conv}\left(H \right) \cap \left\{(x,z, y) \,|\, e^{\top}y\leq k \right\} \nonumber \\
\label{preproj} \Rightarrow \textup{proj}_y \left( \textup{conv}\left(H \cap \left\{(x,z, y) \,|\, e^{\top}y \leq k \right\} \right)\right) &= & \textup{proj}_y \left(\textup{conv}\left(H \right) \cap \left\{(x,z, y) \,|\, e^{\top}y\leq k \right\} \right).
\end{eqnarray}
Now note that
\begin{eqnarray}
\textup{proj}_y \left( \textup{conv}\left(H \cap \left\{(x,z, y) \,|\, e^{\top}y \leq k \right\} \right)\right) &= & \textup{conv}\left(\textup{proj}_y\left(H \cap \left\{(x,z, y) \,|\, e^{\top}y \leq k \right\} \right)\right) \nonumber \\
\label{projleft}&=& \textup{conv}\left( \textup{proj}_y(H) \cap \left\{y \,|\, e^{\top}y \leq k \right\} \right),
\end{eqnarray}
where the second equality follows from the fact that the  constraint $e^{\top}y \leq k$ is independent of $x$ and $z$.

Also note that
\begin{eqnarray}
\textup{proj}_y \left(\textup{conv}\left(H \right) \cap \left\{(x,z, y) \,|\, e^{\top}y\leq k \right\} \right) &=& \textup{proj}_y \left(\textup{conv}\left(H \right)\right) \cap \textup{proj}_y \left(\left\{(x,z, y) \,|\, e^{\top}y\leq k
\right\} \right) \nonumber\\
\label{projright}&=&  \textup{conv}\left(\textup{proj}_y\left(H \right)\right) \cap \left\{y \,|\, e^{\top}y\leq k \right\},
\end{eqnarray}
where again the first equality follows from the fact that the  constraint $e^{\top}y \leq k$ is independent of $x$ and $z$. Equations (\ref{preproj})-(\ref{projright}) complete the proof.
\end{proof}

\end{document}